\documentclass{article}
\usepackage{amsthm}
\usepackage{ascmac}
\usepackage{mathtools}
\theoremstyle{definition}
\newtheorem{theorem}{Theorem}[section]
\newtheorem{proposition}[theorem]{Proposition}
\newtheorem{lemma}[theorem]{Lemma}
\newtheorem{corollary}[theorem]{Corollary}
\newtheorem{remark}[theorem]{Remark}
\newtheorem{definition}[theorem]{Definition}

\newtheorem*{theorem*}{Theorem}
\newtheorem*{definition*}{Definition}
\newtheorem*{lemma*}{Lemma}
\newtheorem*{proposition*}{Proposition}
\newtheorem*{corollary*}{Corollary}
\newtheorem*{example*}{Example}
\newtheorem*{remark*}{Remark}
\newtheorem*{claim*}{Claim}
\usepackage{graphicx}							
\usepackage{amsmath,amssymb}		
\def\inter{\mathrm{Int}}
\title{The Hausdorff dimension function of the family of conformal iterated function systems of generalized complex continued fractions \footnote{Date:\today. 
Published in Discrete Contin. Dyn. Syst. \textbf{40} (2020), no. 2, pp. 753--766. } \footnote{2010 Mathematics Subject Classification. 28A80, 37F35. } }
\author{Kanji INUI, Hikaru OKADA and Hiroki SUMI}
\date{}
\begin{document}
\maketitle
\begin{abstract}
We consider the family of CIFSs of generalized complex continued fractions with a complex parameter space. 
This is a new interesting example to which we can apply a general theory of infinite CIFSs and analytic families of infinite CIFSs. 
We show that the Hausdorff dimension function of the family of the CIFSs of generalized complex continued fractions is continuous in the parameter space and is real-analytic and subharmonic in the interior of the parameter space. As a corollary of these results, we also show that the Hausdorff dimension function has a maximum point and the maximum point belongs to the boundary of the parameter space. \footnote{Keywords. infinite conformal iterated function systems, fractal geometry, limit sets, Hausdorff dimension, generalized complex continued fractions. } 
\end{abstract}
\section{Introduction}
Iterated function systems arise in many contexts. 
One of the most famous applications to use the systems is to construct many kinds of fractals. 
Studies of these fractal sets constructed by the contractive iterated function systems (for short IFS), sometimes called limit sets, have been developed in many directions. 
Note that general properties of limit sets of systems with finitely many mappings have been well-studied. 
For example, see Hutchinson \cite{H}, Falconer \cite{F}, Barnsley \cite{B}, Bandt and Graf \cite{BG}, and Schief \cite{S} and so on. 

Around the middle of the 1990's, studies of the limit sets of the conformal IFSs (for short CIFS) with infinitely many mappings were initiated by Mauldin and Urba\'{n}ski \cite{MU}, \cite{MU2} and by Moran \cite{Mo}. 
Note that Mauldin and Urba\'{n}ski also gave some interesting examples about their general theory in the papers \cite{MU} and \cite{MU2}. 
In their general theory, they showed deep results to estimate the Hausdorff dimension and the Hausdorff measure of the limit sets. 

Moreover, interests in families of CIFSs have emerged. 
Roy and Urba\'{n}ski especially studied the Hausdorff dimension functions for the families of CIFSs (\cite{RU}). 
They showed that the Hausdorff dimension functions for the families of CIFSs are continuous with respect to the ``$\lambda$-topology" which they introduced, and if the families are analytic, then the Hausdorff dimension functions for the families of CIFSs are real-analytic and subharmonic. 

There exist rich general theories of limit sets of CIFSs for given families of infinite CIFSs. 
However, the authors do not think we have found sufficiently many examples of families of infinite CIFSs to which we can apply the above general theories. 
Therefore, the aim of this paper is to present a new interesting family of infinite CIFSs. 
More precisely, we define a subset of the complex plane as a parameter space and for each point in the parameter space, we introduce a CIFS related to generalized complex continued fractions. 
The authors found that Mauldin and Urba\'{n}ski's general theories \cite{MU}, \cite{MU2} and Roy and Urba\'{n}ski's general theory \cite{RU} can apply to this family. 
The authors also show that the Hausdorff dimension function for the family is continuous in the parameter space and is real-analytic and subharmonic in the interior of the parameter space by applying the general theories of the families of infinite CIFSs. 
The authors also show that, as a corollary for these results, the Hausdorff dimension function has a maximum point and it belongs to the boundary of the parameter space. 

Precise statements are the following. 
Let 
\[A_{0} := \{ \tau = u + iv \in \mathbb{C} \ | \ u \geq 0 \ \mathrm{and} \ v \geq 1 \}\]
and $X := \{ z \in \mathbb{C} \ | \ |z-1/2| \leq 1/2 \}$. Also, we set $I_{\tau} := \{ m +n\tau \in \mathbb{C} \ |\ m, n \in \mathbb{N} \}$ for each $\tau \in A_{0}$, where $\mathbb{N}$ is the set of the positive integers. 
\begin{definition}[The CIFS of generalized complex continued fractions]
For each $\tau \in A_{0}$, $S_{\tau} := \{ \phi_{b} \colon X \rightarrow X \ |\ b \in I_{\tau} \} $ is called the CIFS of generalized complex continued fractions. Here, 
\[ \phi_{b}(z) := \frac{1}{z + b} \quad ( z \in X ). \]
\end{definition}
The $\{S_{\tau}\}_{\tau \in A_{0}}$ is called the family of CIFSs of generalized complex continued fractions. 
For each $\tau \in A_{0}$, let $J_{\tau}$ be the limit set of the CIFS $S_{\tau}$ (see Definition \ref{CIFSDEF}) and let $h_{\tau}$ be the Hausdorff dimension of the limit set $J_{\tau}$. 
Also, we denote by $\inter(A_{0})$ the set of interior points of $A_{0}$ with respect to the topology in $\mathbb{C}$. 
We now give the main results of this paper.  
\begin{theorem}[Main result A]\label{main1}
Let $\{S_{\tau}\}_{\tau \in A_{0}}$ be the family of CIFSs of generalized complex continued fractions. Then $\tau \mapsto h_{\tau}$ is continuous in $A_{0}$. 
Moreover, for each $\tau \in A_{0}$, $h_{\tau }$ is equal to the unique zero of the pressure function of $S_{\tau }$ (see Definition 2.2), $1<h_{\tau }<2$ and $h_{\tau }\rightarrow 1 \ (\tau \in A_{0}, \tau \rightarrow \infty )$. 
\end{theorem}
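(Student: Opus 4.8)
The plan is to verify that each $S_\tau$ is a genuine infinite CIFS in the sense of Mauldin–Urbański, and then invoke their general theory to identify $h_\tau$ with the zero of the pressure function. First I would check the defining properties of a CIFS for the family $\{\phi_b\}_{b\in I_\tau}$ on $X=\{z:|z-1/2|\le 1/2\}$: that each $\phi_b(z)=1/(z+b)$ maps $X$ into $X$ (using that $\mathrm{Re}(b)\ge 1$ and $\mathrm{Im}(b)\ge 1$ for $b=m+n\tau$ with $\tau\in A_0$, so $z+b$ stays in a region whose reciprocal lands in the disk $X$), that the maps are uniformly contracting with $\|\phi_b'\|_\infty<1$, that they satisfy the open set condition on $\mathrm{Int}(X)$, the bounded distortion property, and the cone condition. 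The key quantitative estimate is a two-sided bound $|\phi_b'(z)|=|z+b|^{-2}\asymp |b|^{-2}$, uniform in $z\in X$, which follows from $|b|\le |z+b|\le |b|+1$; this is what makes the system conformal with good distortion control.

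Next I would set up the pressure function. For a finite truncation or for the full system one defines, for $t>0$, the topological pressure $P(t)=\lim_n \tfrac1n\log\sum_{\omega\in I_\tau^n}\|\phi_\omega'\|_\infty^t$, and the Mauldin–Urbański theorem gives $h_\tau=\inf\{t:P(t)\le 0\}$, with $h_\tau$ equal to the unique zero of $P$ once one checks that the series $\sum_{b\in I_\tau}\|\phi_b'\|_\infty^t=\sum_{b\in I_\tau}\sup_{z}|z+b|^{-2t}$ converges for some $t$ and that the system is regular. The convergence exponent of this Dirichlet-type series is governed by counting lattice-like points $b=m+n\tau$ with $|b|\le R$, which grows like $R^2$, so $\sum_b|b|^{-2t}$ converges exactly when $2\cdot 2t>2\cdot 2$, i.e. this is the step that pins down the dimension between $1$ and $2$.

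To get the bounds $1<h_\tau<2$ and the continuity and limiting statements I would argue as follows. The upper bound $h_\tau<2$ is immediate since $J_\tau\subset X\subset\mathbb{C}$ and the system is not the whole plane; more precisely the series converges at $t$ slightly below $2$ by the lattice count above, forcing the zero of pressure to be strictly below $2$. For the lower bound $h_\tau>1$ I would show the series $\sum_b|b|^{-2t}$ diverges at $t=1$ (the count $\#\{b:|b|\le R\}\sim cR^2$ makes $\sum|b|^{-2}$ diverge logarithmically), so $P(1)>0$ and hence the zero lies strictly above $1$; this is the step I expect to be the main obstacle, since one must control the precise asymptotics of the lattice point count $\#\{(m,n)\in\mathbb{N}^2: |m+n\tau|\le R\}$ uniformly and locate it relative to the critical exponents. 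Continuity of $\tau\mapsto h_\tau$ on $A_0$ would follow from Roy–Urbański's continuity theorem once I verify the family depends continuously (indeed analytically in $\mathrm{Int}(A_0)$) on $\tau$ in their $\lambda$-topology, and the limit $h_\tau\to 1$ as $\tau\to\infty$ I would obtain by showing that as $|\tau|\to\infty$ the convergence exponent of the defining series tends to $1$ from above, because the lattice $I_\tau$ becomes sparse and the pressure at any fixed $t>1$ tends to a negative value, pushing the unique zero down to $1$.
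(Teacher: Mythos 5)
Most of your outline coincides with the paper's actual proof: verifying the CIFS axioms (Proposition \ref{GCCFisCIFS}), the comparison $|\phi_b'(z)|\asymp|b|^{-2}$ uniformly on $X$ (Lemma \ref{basicestimate}), deducing $\theta_\tau=1$ and hereditary regularity from divergence of $\sum_{b\in I_\tau}|b|^{-2}$ at $t=1$ together with convergence for $t>1$ (the paper organizes this count with dyadic shells $K(p)$ rather than the counting function $\#\{b:|b|\le R\}\sim cR^{2}$, but the content is the same), so that $h_\tau$ is the unique zero of $P_\tau$ and $h_\tau>\theta_\tau=1$ by Theorems \ref{presshaus} and \ref{hregularequi}; continuity via Roy--Urba\'{n}ski's $\lambda$-topology theorem (your unverified ``continuous dependence in the $\lambda$-topology'' is exactly the paper's Lemma \ref{LOGEST}, which supplies condition (L2)); and $h_\tau\to1$ by showing the pressure at any fixed $t=1+\epsilon$ becomes negative for $|\tau|$ large, which the paper makes rigorous by proving $\psi_\tau^1(1+\epsilon)\to 0$ via dominated convergence (Lemma \ref{INFTYLEM}).

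The genuine gap is the upper bound $h_\tau<2$. You claim that convergence of $\sum_{b}\|\phi_b'\|^{t}$ for $t$ slightly below $2$ ``forces the zero of pressure to be strictly below $2$''. That inference is false: finiteness of $\psi_\tau^{1}(t)$ shows only $\theta_\tau\le t$, and $h_\tau$ can lie far above $\theta_\tau$. From $P_\tau(t)\le\log\psi_\tau^{1}(t)$ you could conclude $P_\tau(t)<0$ only if you knew $\psi_\tau^{1}(t)<1$, and convergence alone gives no bound on the value of the sum. Indeed, any \emph{finite} CIFS has $\psi^{1}(t)<\infty$ for every $t\ge 0$, yet the four similarities mapping the unit square onto its four quadrants satisfy the OSC and have limit set of dimension exactly $2$; so no soft convergence/counting argument can yield $h_\tau<2$. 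Your alternative remark that the bound is ``immediate since $J_\tau\subset X$ and the system is not the whole plane'' is also not a proof, since proper compact subsets of $\mathbb{C}$ can have Hausdorff dimension $2$. The paper closes this step with Theorem \ref{HAUSUPPESTIMATE} (Mauldin--Urba\'{n}ski): for a \emph{regular} CIFS, if $\lambda_{2}(\inter(X)\setminus X_{1})>0$, where $X_{1}=\bigcup_{b\in I_\tau}\phi_b(X)$, then $h_\tau<2$. This is checked geometrically in Theorem \ref{betweenoneandtwo}: every $g_b(X)=X+b$ lies in $\{z:\Re z\ge 1,\ \Im z\ge 0\}$ (because $\Im z\ge -1/2$ and $\Im b=nv\ge 1$), so for a small ball $U_{0}\subset\{z:\Re z\ge 1,\ \Im z<0\}$ the open set $f^{-1}(U_{0})\subset\inter(X)$, with $f(z)=1/z$, is disjoint from $X_{1}$ and has positive area. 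You need this idea (or a genuinely quantitative bound $\psi_\tau^{1}(t)<1$ for some $t<2$, uniform in $\tau$, which is much more delicate) to close the gap. Two smaller inaccuracies: $|z+b|\ge|b|$ can fail since $\Im z$ may be negative (use $|z+b|\ge|b|-1$ and $|b|\ge\sqrt{2}$ instead); and the convergence exponent does not ``tend to $1$ from above'' as $\tau\to\infty$ --- it equals $1$ for every $\tau$ --- the quantity that decays is $\psi_\tau^{1}(t)$ at each fixed $t>1$, which is the argument you in fact sketch next.
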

\begin{theorem}[Main result B]\label{main2}
Let $\{S_{\tau}\}_{\tau \in A_{0}}$ be the family of CIFSs of generalized complex continued fractions. Then we have that $\tau \mapsto h_{\tau}$ is real-analytic and subharmonic in $\inter(A_{0})$. 
Also, the function $\tau \mapsto h_{\tau }$ is not constant on any non-empty open subset of $A_{0}.$
\end{theorem}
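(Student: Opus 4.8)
The plan is to deduce the real-analyticity and subharmonicity from the general theory of analytic families of CIFSs of Roy and Urbański \cite{RU}, and to deduce the non-constancy from the real-analyticity together with the asymptotics $h_{\tau} \to 1$ established in Theorem \ref{main1}. Thus the first task is to verify that $\{S_{\tau}\}_{\tau \in \inter(A_{0})}$ is an analytic family of CIFSs in their sense; once this is done, their main theorem yields at once that $\tau \mapsto h_{\tau}$ is real-analytic and subharmonic on $\inter(A_{0})$.

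To check the analyticity of the family, I would start from the observation that for each fixed index $(m,n) \in \mathbb{N}^2$ the generator $\phi_{m+n\tau}(z) = 1/(z + m + n\tau)$ depends holomorphically on $\tau$, since $\tau \mapsto b = m + n\tau$ is affine and $b \mapsto 1/(z+b)$ is holomorphic away from $z = -b$ (and $-b \notin X$ for $\tau \in A_{0}$). Theorem \ref{main1} already guarantees that each $S_{\tau}$ is a genuine CIFS, that $1 < h_{\tau} < 2$, and that $h_{\tau}$ is the unique zero of the pressure function $P_{\tau}$; the remaining conditions in the definition of an analytic family are the uniformity requirements. Concretely, I would fix a compact parameter disk $D$ around an arbitrary point of $\inter(A_{0})$, extend each $\phi_{m+n\tau}$ holomorphically to a slightly larger parameter neighborhood, and establish uniform contraction and uniform bounded distortion on $D$, together with the uniform convergence of the series $\sum_{(m,n)} \|\phi_{m+n\tau}'\|_{\infty}^{t}$ for $t$ in a neighborhood of $h_{\tau}$. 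These estimates let one extend the pressure $P(\tau, t)$ to a function holomorphic in $\tau$ and jointly real-analytic in $(\tau, t)$ with $\partial_{t} P < 0$ at the zero, so that the implicit function theorem applies.

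The main obstacle is precisely these uniform estimates. The key quantity is $|\phi_{m+n\tau}'(z)| = |z + m + n\tau|^{-2}$, and one must bound it and its holomorphic continuation uniformly in $z \in X$ and in the index $(m,n)$ as $\tau$ varies over the parameter disk $D \subset \inter(A_{0})$. Here the lattice structure of $I_{\tau}$ is essential: for $\tau = u + iv$ with $v$ bounded below and $u$ bounded, $|m + n\tau|$ is comparable to $\sqrt{m^2 + n^2}$ with constants depending only on $D$, which both guarantees the convergence of the relevant $t$-series for $t > 1$ and provides the uniform control needed to differentiate under the sum. Verifying that these comparisons survive the passage to a complex neighborhood of $D$, so that the extensions remain contractions with bounded distortion and the series still converge holomorphically, is the technical heart of the argument.

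Finally, for the non-constancy I would argue by contradiction using the identity principle. Since $\inter(A_{0}) = \{u + iv : u > 0,\ v > 1\}$ is a convex, hence connected, open set and $A_{0} = \overline{\inter(A_{0})}$, any non-empty relatively open subset $U$ of $A_{0}$ meets $\inter(A_{0})$. If $h_{\tau}$ were constant on $U$, then the real-analytic function $\tau \mapsto h_{\tau}$ would be constant on the non-empty open set $U \cap \inter(A_{0})$, and hence, by the identity theorem for real-analytic functions on the connected set $\inter(A_{0})$, constant on all of $\inter(A_{0})$; by the continuity from Theorem \ref{main1} it would then be constant on $A_{0}$. This contradicts the facts, also from Theorem \ref{main1}, that $h_{\tau} > 1$ for every $\tau$ while $h_{\tau} \to 1$ as $\tau \to \infty$, since a constant value would have to equal both something strictly greater than $1$ and the limit $1$.
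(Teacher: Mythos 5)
Your handling of two of the three assertions is sound: subharmonicity does follow from plane-analyticity of the family (holomorphy of $\tau\mapsto 1/(z+m+n\tau)$ on $\inter(A_{0})$) together with Roy--Urba\'{n}ski's result (Theorem \ref{PATHM}), and your non-constancy argument (identity theorem for real-analytic functions on the connected open set $\inter(A_{0})$, then continuity, then the contradiction between $h_{\tau}>1$ and $h_{\tau}\to1$ from Theorem \ref{main1}) is exactly the argument the paper leaves implicit, worked out correctly. The genuine gap is in the real-analyticity step, which is the technical heart of the matter. Roy--Urba\'{n}ski's real-analyticity theorem (Theorem \ref{PRATHM}) does not apply to an arbitrary plane-analytic family satisfying uniform contraction, uniform bounded distortion, and convergence of $\sum_{(m,n)}\|\phi_{m+n\tau}'\|^{t}$; its hypothesis is that the family be \emph{regularly} plane-analytic (Definition \ref{PRACIFS}), i.e.\ that some $S_{\tau_{0}}$ is strongly regular \emph{and} that there exists $\eta\in(0,1)$ with $|\kappa_{w}^{\tau_{0}}(\tau)-1|\leq\eta$ for all infinite words $w$ and all parameters $\tau$, where $\kappa_{w}^{\tau_{0}}(\tau)$ is the ratio of first-letter derivatives evaluated at the corresponding limit-set points. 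You never state or verify this condition, and it cannot be bypassed in the way you sketch: the pressure is built from suprema $\sup_{z\in X}|\phi_{w}'(z)|$, which are real-valued and not holomorphic in $\tau$, so your uniform estimates do not ``extend $P(\tau,t)$ to a function holomorphic in $\tau$.'' The entire role of the $\kappa_{w}$-condition in \cite{RU} is to make complexification possible (since $|\kappa_{w}-1|<1$, principal-branch powers $\kappa_{w}^{t}$ are well defined and holomorphic), which is what ultimately feeds the implicit-function-theorem mechanism you allude to.

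For comparison, the paper verifies the missing condition locally as follows: strong regularity of every $S_{\tau}$ comes from hereditary regularity (Lemma \ref{EL}); Lemma \ref{LOGEST} (the two-sided bound on $|z'+m+n\tau_{k}|^{2}/|z+m+n\tau|^{2}$) shows that $|\kappa_{w}^{\tau_{0}}|$ is bounded on a ball $U''$ around $\tau_{0}$ uniformly in $w$; since each $\kappa_{w}^{\tau_{0}}$ is holomorphic, the Cauchy integral formula yields a uniform bound $M$ for $(\kappa_{w}^{\tau_{0}})'$ on a smaller ball, and integrating from $\tau_{0}$ (where $\kappa_{w}^{\tau_{0}}(\tau_{0})=1$) gives $|\kappa_{w}^{\tau_{0}}(\tau)-1|\leq M|\tau-\tau_{0}|\leq\eta<1$ on a sufficiently small ball $U$. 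Hence $\{S_{\tau}\}_{\tau\in U}$ is regularly plane-analytic, Theorem \ref{PRATHM} applies on $U$, and real-analyticity globalizes since $\tau_{0}$ was arbitrary. Your lattice comparison $|m+n\tau|\asymp\sqrt{m^{2}+n^{2}}$ is indeed the right raw estimate (it is essentially Lemma \ref{LOGEST}), but on its own it controls only the convergence of the series, i.e.\ regularity of each individual system; to complete your proof you would need to convert it, via the Cauchy-formula argument above or something equivalent, into the uniform distortion-ratio bound that Roy--Urba\'{n}ski's theorem actually requires.
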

\begin{corollary}[Main result C]\label{main3}
Let $\{S_{\tau}\}_{\tau \in A_{0}}$ be the family of CIFSs of generalized complex continued fractions. 
Then, there exists a maximum value of the function $\tau \mapsto h_{\tau} \ (\tau \in A_{0})$ and any maximum point of the function $\tau \mapsto h_{\tau}$ belongs to the boundary of $A_{0}$. In particular, we have that $\max \{h_{\tau} | \tau \in A_{0} \} = \max \{h_{\tau} | \tau \in \partial A_{0} \} $. 
\end{corollary}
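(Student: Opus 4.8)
The plan is to deduce the corollary directly from Main results A and B, splitting the argument into two parts: first establishing that the supremum of $\tau \mapsto h_{\tau}$ over $A_{0}$ is actually attained, and then ruling out any maximizer in $\inter(A_{0})$.

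For existence, I would exploit the asymptotic behavior $h_{\tau} \rightarrow 1$ as $\tau \rightarrow \infty$ together with the strict lower bound $h_{\tau} > 1$, both furnished by Theorem \ref{main1}. Fix any base point $\tau_{0} \in A_{0}$, so that $h_{\tau_{0}} > 1$. Setting $\varepsilon := (h_{\tau_{0}} - 1)/2 > 0$, the limit $h_{\tau} \rightarrow 1$ yields an $R > 0$ such that $h_{\tau} < 1 + \varepsilon < h_{\tau_{0}}$ whenever $\tau \in A_{0}$ satisfies $|\tau| > R$. Hence the supremum of $h_{\tau}$ over $A_{0}$ coincides with its supremum over $K := A_{0} \cap \{ |\tau| \leq R \}$, which is closed and bounded, hence compact. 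Since $\tau \mapsto h_{\tau}$ is continuous on $A_{0}$ by Theorem \ref{main1}, it attains a maximum on $K$, and this value is the global maximum over all of $A_{0}$.

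For the localization of maximizers, I would invoke the maximum principle for subharmonic functions. Suppose, for contradiction, that some maximum point $\tau^{*}$ lies in $\inter(A_{0})$. The set $\inter(A_{0}) = \{ u + iv \mid u > 0,\ v > 1 \}$ is a shifted open quadrant, hence convex and in particular connected. By Theorem \ref{main2}, $\tau \mapsto h_{\tau}$ is subharmonic on this domain, and since $\tau^{*}$ realizes the global maximum over all of $A_{0}$, it is a fortiori an interior maximum of the restriction to $\inter(A_{0})$. The strong maximum principle for subharmonic functions then forces $\tau \mapsto h_{\tau}$ to be constant on $\inter(A_{0})$, contradicting the final assertion of Theorem \ref{main2} that $h_{\tau}$ is not constant on any non-empty open subset of $A_{0}$. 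Therefore every maximizer belongs to $\partial A_{0}$, and the displayed identity $\max \{ h_{\tau} \mid \tau \in A_{0} \} = \max \{ h_{\tau} \mid \tau \in \partial A_{0} \}$ follows immediately.

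The routine parts here are the compactness argument for existence and the final passage to the boundary; the one ingredient deserving a word of care is the strong maximum principle on the \emph{unbounded} domain $\inter(A_{0})$. I expect this to be unproblematic, since the statement is purely local: resting on the sub-mean-value inequality and upper semicontinuity, the set on which $h_{\tau}$ equals its global maximum is simultaneously open and closed in the connected set $\inter(A_{0})$, hence equal to the whole domain. The genuine content of the corollary is thus already packaged in the non-constancy clause of Theorem \ref{main2}, which is precisely what excludes an interior maximizer.
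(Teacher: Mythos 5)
Your proposal is correct and follows essentially the same route as the paper: existence of a maximum via the decay $h_{\tau} \to 1$ at infinity (Lemma \ref{INFTYLEM}) combined with continuity on a compact truncation of $A_{0}$, then exclusion of interior maximizers via subharmonicity on $\inter(A_{0})$. If anything, your write-up is slightly more careful than the paper's, since you make explicit that the maximum principle alone forces constancy on the connected set $\inter(A_{0})$ and that the contradiction really comes from the non-constancy clause of Theorem \ref{main2}, a step the paper leaves implicit.
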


\begin{remark}
It was shown that for each $\tau \in A_{0}$, $\overline{J_{\tau }}\setminus J_{\tau}$ is at most countable and $h_{\tau }=\dim _{\mathcal{H}}(\overline{J_{\tau }}$) (\cite[Theorem 6.11]{Su}). 
For the readers, we give a proof of this fact in the Appendix of this paper. 
Also, for each $\tau \in A_{0}$, since the set of attracting fixed points of elements of the semigroup generated by $S_{\tau}$ is dense in $J_{\tau }$, Theorem 1.1 of \cite{St} implies that $\overline{J_{\tau }}$ is equal to the Julia set of the rational semigroup generated by $\{ \phi _{b}^{-1}\mid b\in I_{\tau }\}$.  
\end{remark}

The idea and strategies to prove the main results are the following. 
We first show that for each $\tau \in A_{0}$, $S_{\tau}$ is a CIFS (see Definition \ref{CIFSDEF} and Proposition \ref{GCCFisCIFS}). 
To prove Proposition \ref{GCCFisCIFS}, we use some facts in complex analysis (for example, properties of M\"{o}bius transformations on the Riemann sphere $\hat{\mathbb{C}}$ and the Koebe distortion theorem). 
In addition, we show a useful inequality for $\psi_{\tau}^{1}(t)$ of the CIFS $S_{\tau}$ (see Definition \ref{PSD} and Lemma \ref{EL}). 
To prove this inequality, we use the Bounded Distortion Property (BDP) of the CIFS and an appropriate countable partition of $I_{\tau} \cong \mathbb{N}^{2}$. 
Combining the useful inequality, careful observations and Lebesgue's dominated convergence theorem, we show that for each $\tau \in A_{0}$, $S_{\tau}$ is a hereditarily regular CIFS and thus $h_{\tau }$ is equal to the unique zero of the pressure function (see Definition \ref{RD}) of $S_{\tau }$ with $\theta_{\tau}=1$ (see Lemma \ref{EL}). Furthermore, we show that $h_{\tau} \to 1$ as $\tau \to \infty$ in $A_{0}$ (see Lemma \ref{INFTYLEM}). 
By using the fact $\theta _{\tau }=1$ for each $\tau \in A_{0}$ and 
a geometric observation, we show that $1<h_{\tau }<2$ for each $\tau \in A_{0}.$

Note that since we deal with a family of CIFSs in this paper, we have to consider the family of the pressure functions and zeros of the functions which is parameterized by $\tau \in A_{0}$. 
For this reason, we have some difficulties. However, we overcome these difficulties to show this lemma by considering $\psi_{\tau}^{1}$ as an infinite series which is parameterized by $\tau \in A_{0}$ and applying Lebesgue's dominated  convergence theorem. 
This approach is new and important to deal with the family of the pressure functions and zeros of the functions. 

It is worth pointing out that for each $\tau \in A_{0}$, we obtain inequality (\ref{elementestimate1}) for the elements of $S_{\tau}$ (Lemma \ref{LOGEST}). 
These inequalities follow from direct calculations. 
By using these inequalities, we show that $\tau \mapsto S_{\tau }$ is continuous with respect to the $\lambda $-topology. 
By applying the theory of continuity of the Hausdorff dimension function for a family of CIFSs from \cite{RU}, we prove the continuity of $\tau \mapsto h_{\tau }$ in Theorem \ref{main1}.  
Moreover, by using the above inequality and the result that $S_{\tau }$ is strongly regular for each $\tau \in A_{0}$, we can show that for each $\tau \in \inter(A_{0})$, there exists an open neighborhood $U$ of $\tau$ such that $\{S_{\tau}\}_{\tau \in U}$ is regularly plane-analytic in the sense of \cite{RU}. 
Combining this with the general theory of real-analyticity of the Hausdorff dimension functions for regularly plane-analytic families of CIFSs from \cite{RU}, we prove Theorem \ref{main2}. 

Finally, by using Theorem \ref{main1}, we obtain that there exists a maximum point of the function $\tau \mapsto h_{\tau}$. 
From this fact and Theorem \ref{main2}, we obtain Corollary \ref{main3}. 

The rest of this paper is organized as follows. 
In section 2, we summarize without proofs the theory of CIFSs and families of the CIFSs. 
In section 3, we prove some properties about the CIFS of the generalized complex continued fractions. 
In section 4, we prove the main results. \\
\\
\textbf{Acknowledgement. }  
The authors thank Rich Stankewitz for valuable comments.
The last author is partially supported by JSPS Kakenhi 18H03671.
\section{Conformal iterated function systems}
In this section, we recall general settings of CIFSs and families of CIFSs (\cite{MU}, \cite{MU2}, \cite{RU}). 
\begin{definition}[Conformal iterated function system] \label{CIFSDEF}
Let $X \subset \mathbb{R}^{d}$ be a non-empty compact and connected set and let $I$ be a finite set or bijective to $\mathbb{N}$. Suppose that $I$ has at least two elements. We say that $S := \{ \phi_{i} \colon X \to X \ |\ i \in I \}$ is a conformal iterated function system (for short, CIFS) if $S$ satisfies the following conditions. 
\begin{enumerate}
\item Injectivity: For all $i \in I$, $\phi_{i} \colon X \to X$ is injective. 
\item Uniform Contractivity: There exists $c \in (0, 1) $ such that, for all $i \in I$ and $x, y \in X $, the following inequality holds. 
\[ | \phi_{i}(x) - \phi_{i}(y) | \leq c| x - y |. \] 
\item Conformality: There exists a positive number $\epsilon$ and an open and connected subset $V \subset \mathbb{R}^{d}$ with $X \subset V$ such that for all $i \in I $, $\phi_{i} $ extends to a $C^{1+\epsilon}$diffeomorphism on $V$ and $\phi_{i} $ is conformal on $V$. 
\item Open Set Condition(OSC): For all $ i, j \in I \ ( i \neq j )$, $ \phi_{i}(\inter(X)) \subset \inter(X)$ and  $ \phi_{i}(\inter(X)) \cap  \phi_{j}(\inter(X)) = \emptyset $. Here, $\inter(X)$ denotes the set of interior points of $X$ with respect to the topology in $\mathbb{R}^{d}$. 
\item Bounded Distortion Property(BDP): There exists $K \geq 1 $ such that for all $x, y \in V $ and for all $w \in I^{*} := \bigcup_{n=1}^{\infty} I^{n}$, the following inequality holds. 
\[ |\phi^{\prime}_{w}(x)| \leq K \cdot |\phi^{\prime}_{w}(y)|. \]
Here, for each $n \in \mathbb{N}$ and $w = w_{1}w_{2} \cdots w_{n} \in I^{n}$, we set $\phi_{w} := \phi_{w_{1}} \circ \phi_{w_{2}} \circ \cdots \circ \phi_{w_{n}}$ and $\displaystyle |\phi^{\prime}_{w}(x)|$ denotes the  norm of the derivative of $\phi_{w}$ at $x \in X$ with respect to the Euclidean metric on $\mathbb{R}^{d}$. 
\item Cone Condition: For all $x \in \partial X$, there exists an open cone $\mathrm{Con}(x, u, \alpha)$ with a vertex $x$, a direction $u$, an altitude $|u|$ and an angle $\alpha$ such that $\mathrm{Con}(x, u, \alpha)$ is a subset of $\inter(X)$. 
\end{enumerate}
$I$ is called an alphabet. 
We endow $I$ with the discrete topology, and endow $I^{\infty} := I^{\mathbb{N}}$ with the product topology. 
Note that $I^{\infty}$ is a Polish space. In addition, if $I$ is a finite set, then $I^{\infty}$ is a compact metrizable space. 

Let $S$ be a CIFS. For each $w=w_{1}w_{2}w_{3} \cdots \in I^{\infty}$, we set $w|_{n} := w_{1}w_{2} \cdots w_{n} \in I^{n}$ and $\phi_{w|_{n}} := \phi_{w_{1}} \circ \phi_{w_{2}} \circ \cdots \circ \phi_{w_{n}}$. Then, we have $\bigcap_{n \in \mathbb{N}} \phi_{w|_{n}}(X)$ is a singleton. We denoted it by $\{ x_{w} \}$. 
the coding map $\pi \colon I^{\infty} \rightarrow X$ of $S$ is defined by $w \mapsto x_{w}$. Note that $\pi \colon I^{\infty} \rightarrow X$ is continuous. 
A limit set of $S$ is defined by 
\[ J_{S} :=\pi(I^{\infty}) = \bigcup_{w \in I^{\infty}} \bigcap_{n \in \mathbb{N}} \phi_{w|_{n}}(X).  \]
\end{definition}

For each IFS $S$, we set $h_{S} := \dim_{\mathcal{H}}J_{S}$, where $\dim_{\mathcal{H}}$ denote the Hausdorff dimension. 
For any CIFS $S$, we define the pressure function of $S$ as follows. 
\begin{definition}[Pressure function]\label{PSD}
For each $n\in \mathbb{N}$, $[0, \infty]$-valued function $\psi^{n}_{S}$ is defined by 
\[\psi^{n}_{S}(t) := \sum_{w \in I^{n}} \left( \sup_{z \in X}|\phi_{w}^{\prime}(z)| \right)^{t} \quad (t \geq 0). \]

We set 
\[\displaystyle P_{S}(t) := \lim_{n \to \infty} \frac{1}{n} \log \psi^{n}_{S}(t) \in (-\infty, \infty]. \]
The function $P_{S} \colon [0, \infty) \to (-\infty, \infty]$ is called the pressure function of $S$. 
\end{definition}
Note that for all $t \geq 0$, $P_{S}(t)$ exists because of the following proposition. 
\begin{proposition}\label{logsubadditive}
For all $m, n \in \mathbb{N}$ and $t \geq 0$, we have $\psi_{S}^{m+n}(t) \leq \psi_{S}^{m}(t)\psi_{S}^{n}(t)$. 
In particular, for all $t \geq 0$, $\log\psi_{S}^{n}(t)$ is subadditive with respect to $n \in \mathbb{N}$. 
\end{proposition}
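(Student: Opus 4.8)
The plan is to exploit the multiplicative structure of the derivative of a composition together with submultiplicativity of the operator norm, and then to factor the resulting double sum. First I would fix $m, n \in \mathbb{N}$ and $t \geq 0$ and set up the canonical bijection $I^{m+n} \to I^{m} \times I^{n}$ sending a word $w = w_{1}w_{2}\cdots w_{m+n}$ to the pair $(u, v)$ with $u = w_{1}\cdots w_{m} \in I^{m}$ and $v = w_{m+1}\cdots w_{m+n} \in I^{n}$. By the definition of $\phi_{w}$ in Definition \ref{CIFSDEF} this gives $\phi_{w} = \phi_{u} \circ \phi_{v}$.

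Next, for any $z \in X$ the chain rule yields that the Jacobian of $\phi_{w}$ at $z$ is the composition of the Jacobian of $\phi_{u}$ at $\phi_{v}(z)$ with the Jacobian of $\phi_{v}$ at $z$, so submultiplicativity of the operator norm gives $|\phi_{w}'(z)| \leq |\phi_{u}'(\phi_{v}(z))| \cdot |\phi_{v}'(z)|$. Since each $\phi_{i}$ maps $X$ into $X$, so does $\phi_{v}$, and hence $\phi_{v}(z) \in X$; therefore $|\phi_{u}'(\phi_{v}(z))| \leq \sup_{\zeta \in X}|\phi_{u}'(\zeta)|$. Taking the supremum over $z \in X$ I obtain
\[ \sup_{z \in X}|\phi_{w}'(z)| \leq \left( \sup_{z \in X}|\phi_{u}'(z)| \right)\left( \sup_{z \in X}|\phi_{v}'(z)| \right), \]
and, because $t \geq 0$ and both sides are nonnegative, raising to the power $t$ preserves this inequality.

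Finally I would sum over $w \in I^{m+n}$ and rewrite the sum via the bijection above as a double sum over $(u, v) \in I^{m} \times I^{n}$; since every summand is nonnegative the computation is valid in $[0,\infty]$, so the double sum factors as a product of two sums, giving
\[ \psi_{S}^{m+n}(t) \leq \sum_{u \in I^{m}}\sum_{v \in I^{n}} \left( \sup_{z \in X}|\phi_{u}'(z)| \right)^{t}\left( \sup_{z \in X}|\phi_{v}'(z)| \right)^{t} = \psi_{S}^{m}(t)\,\psi_{S}^{n}(t). \]
For the ``in particular'' clause, each $\phi_{w}$ is an injective contraction that is a diffeomorphism with nonvanishing derivative, so $\psi_{S}^{n}(t) > 0$ and $\log\psi_{S}^{n}(t)$ is well defined in $(-\infty,\infty]$; taking logarithms of the submultiplicativity inequality yields $\log\psi_{S}^{m+n}(t) \leq \log\psi_{S}^{m}(t) + \log\psi_{S}^{n}(t)$, which is exactly subadditivity in $n$.

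The step requiring the most care is the factorization of the supremum: one must use that $\phi_{v}(X) \subseteq X$ in order to replace the supremum of $|\phi_{u}'|$ over the image $\phi_{v}(X)$ by its supremum over all of $X$, and one must invoke submultiplicativity of the operator norm (rather than an equality) for the chain-rule estimate, which is all that the desired direction of the inequality requires. Conformality is not actually needed here; only the bound $\|AB\| \leq \|A\|\,\|B\|$ and the invariance $\phi_{i}(X)\subseteq X$ enter. The remaining manipulations (splitting the sum and passing to logarithms in the extended reals) are routine.
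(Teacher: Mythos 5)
Your proof is correct: the splitting $\phi_{w} = \phi_{u} \circ \phi_{v}$ for $w \in I^{m+n}$, the chain-rule estimate combined with $\phi_{v}(X) \subseteq X$ to bound $|\phi_{u}'(\phi_{v}(z))|$ by $\sup_{\zeta \in X}|\phi_{u}'(\zeta)|$, and the factorization of the nonnegative double sum over $I^{m} \times I^{n}$ constitute precisely the standard argument, and your handling of the extended-real values when passing to logarithms is also sound. Note that the paper states this proposition without giving any proof (it is a standard fact from the Mauldin--Urba\'{n}ski theory), so there is no written proof to compare against; your argument is the canonical one and fills that gap correctly.
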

We set $\theta_{S} := \inf\{ t \geq 0 |\ \psi_{S}^{1}(t) < \infty \}$. 
By using the pressure function, we define properties of CIFSs. 
\begin{definition}[Regular, Strongly regular, Hereditarily regular]\label{RD}
Let $S$ be a CIFS. We say that $S$ is regular if there exists $t \geq 0$ such that $P_{S}(t) = 0$. 
We say that $S$ is strongly regular if there exists $t \geq 0$ such that $P_{S}(t) \in (0, \infty)$. 
We say that $S$ is hereditarily regular if, for all $I^{\prime} \subset I$ with $|I \setminus I^{\prime}| < \infty $, $S^{\prime} := \{ \phi_{i} \colon X \to X \ |\ i \in I^{\prime} \} $ is regular. Here, for any set $A$, we denote by $|A|$ the cardinality of $A$. 
\end{definition}
Note that if a CIFS $S$ is hereditarily regular, then $S$ is strong regular and if $S$ is strong regular, then $S$ is regular. 
We set $F(I) := \{ F \subset I |\ 2 \leq |F| < \infty \}$. For each $F \in F(I)$, we set $S_{F} := \{ \phi_{i} \colon X \to X |\ i \in F \}$. 
Mauldin and Urba\'{n}ski showed the following results. 
\begin{theorem}[\cite{MU} Theorem 3.15]\label{presshaus}
Let $S$ be a CIFS. Then we have 
\[h_{S} = \inf\{ t \geq 0 \ |\ P_{S}(t) < 0 \} = \sup \{ h_{S_{F}} |\ F \in F(I) \} \geq \theta_{S}.\] 
Moreover, if there exists $t \geq 0$ such that $P_{S}(t) = 0$, then $t$ is the unique zero of the pressure function $P_{S}$ and we have $t = h_{S}$. 
\end{theorem}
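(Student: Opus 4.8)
The plan is to prove this Bowen-type formula in four stages: (i) record the analytic properties of the pressure function, (ii) bound $h_S$ from above by the natural cylinder covers, (iii) obtain the matching lower bound by reducing to the classical finite-alphabet case through the identity $P_S=\sup_F P_{S_F}$, and (iv) read off uniqueness of the zero from strict monotonicity. First I would fix the notation $r_w:=\sup_{z\in X}|\phi_w'(z)|$ for $w\in I^{*}$, so that $\psi_S^{n}(t)=\sum_{w\in I^{n}}r_w^{t}$. Two consequences of the axioms drive everything: Uniform Contractivity together with Conformality gives $r_w\le c^{|w|}$, and the BDP makes $\operatorname{diam}\phi_w(X)$ comparable to $r_w\operatorname{diam}X$. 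From $r_w\le c^{|w|}<1$ I get, for $t'>t$, the pointwise bound $r_w^{t'}\le c^{(t'-t)|w|}r_w^{t}$, hence $\psi_S^{n}(t')\le c^{(t'-t)n}\psi_S^{n}(t)$ and therefore $P_S(t')\le P_S(t)+(t'-t)\log c$; since $\log c<0$ this shows that $P_S$ is strictly decreasing on the set where it is finite. Log-convexity of $t\mapsto\psi_S^{n}(t)$, hence convexity of $P_S$, follows from H\"older's inequality, and $P_S(t)=+\infty$ for $t<\theta_S$ is immediate. Writing $\gamma:=\inf\{t\ge 0:P_S(t)<0\}$, monotonicity makes $\gamma$ well defined with $P_S(t)<0$ for $t>\gamma$ and $P_S(t)>0$ for $t<\gamma$; in particular $\gamma\ge\theta_S$, which will give the asserted inequality $h_S\ge\theta_S$ once $h_S=\gamma$ is established.

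For the upper bound I would fix $t>\gamma$, so $P_S(t)<0$, and use $\{\phi_w(X)\}_{w\in I^{n}}$ as a cover of $J_S$. Its mesh tends to $0$ because $\operatorname{diam}\phi_w(X)\le (\operatorname{diam}X)\,c^{n}$, while $\sum_{w\in I^{n}}(\operatorname{diam}\phi_w(X))^{t}\le (\operatorname{diam}X)^{t}\,\psi_S^{n}(t)$ tends to $0$ exponentially, since $\tfrac1n\log\psi_S^{n}(t)\to P_S(t)<0$. Hence $\mathcal{H}^{t}(J_S)=0$ and $h_S\le t$; letting $t\downarrow\gamma$ gives $h_S\le\gamma$.

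For the lower bound I would invoke the classical Bowen formula for finite conformal systems: each $S_F$ with $F\in F(I)$ is a finite CIFS satisfying the OSC, so $h_{S_F}$ is the unique zero of the (finite) pressure $P_{S_F}$. Because $J_{S_F}\subseteq J_S$ and Hausdorff dimension is monotone, $\sup_{F}h_{S_F}\le h_S$. To close the loop I would prove the key identity $P_S(t)=\sup_{F\in F(I)}P_{S_F}(t)$. The inequality $\ge$ is immediate from $\psi_S^{n}(t)=\sup_F\psi_{S_F}^{n}(t)$, which holds by monotone convergence of the nonnegative countable sum over $w\in I^{n}$ as $F\nearrow I$. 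Granting the identity, for any $t<\gamma$ I have $P_S(t)>0$, so there is some $F$ with $P_{S_F}(t)>0$; since $P_{S_F}$ is strictly decreasing with zero at $h_{S_F}$, this forces $h_{S_F}>t$, whence $\sup_F h_{S_F}\ge t$, and letting $t\uparrow\gamma$ yields $\sup_F h_{S_F}\ge\gamma$. Combining the three displayed bounds gives $h_S=\gamma=\sup_F h_{S_F}$.

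Finally, uniqueness is essentially free: strict monotonicity of $P_S$ on its domain of finiteness shows it has at most one zero, and if $P_S(t_0)=0$ then $P_S<0$ just above $t_0$ and $P_S>0$ just below, so $t_0=\gamma=h_S$. The step I expect to be the genuine obstacle is the reverse direction $P_S(t)\le\sup_F P_{S_F}(t)$ of the key identity: this requires interchanging the supremum over finite alphabets $F$ with the limit in $n$ defining the pressure, which is precisely where infiniteness of $I$ is felt. I would control it using the submultiplicativity $\psi_S^{m+n}(t)\le\psi_S^{m}(t)\psi_S^{n}(t)$ of Proposition \ref{logsubadditive} to replace the limit by a single suitable length $n$, then choose $F$ large enough that $\psi_{S_F}^{n}(t)$ captures a definite fraction of $\psi_S^{n}(t)$; alternatively one can bypass it entirely through a direct mass-distribution (Frostman) construction of a measure supported on $J_{S_F}$ for suitable finite $F$, which yields the lower bound $h_S\ge\gamma$ without the interchange.
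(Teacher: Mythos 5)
This statement is quoted by the paper from \cite{MU} (Theorem 3.15) and is not proved in the paper at all, so your attempt can only be measured against the source's argument. Your overall architecture is in fact the standard Mauldin--Urba\'{n}ski route: the covering estimate giving $h_S\le\gamma:=\inf\{t\ge 0 \mid P_S(t)<0\}$, the reduction to finite subsystems via the identity $P_S(t)=\sup_{F}P_{S_F}(t)$ combined with the finite-alphabet Bowen formula to get $\gamma\le\sup_F h_{S_F}\le h_S$, and strict monotonicity of $P_S$ on its finiteness domain for the uniqueness clause. Those parts are sound, including the closing chain $h_S\le\gamma\le\sup_F h_{S_F}\le h_S$.

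The genuine gap sits exactly at the step you flag as the obstacle, and the tool you name there cannot close it. Submultiplicativity only produces \emph{upper} bounds on pressure: it gives $P_{S_F}(t)=\inf_m\frac1m\log\psi^m_{S_F}(t)\le\frac1n\log\psi^n_{S_F}(t)$. So after you pick a single length $n$ and a finite $F$ with $\psi^n_{S_F}(t)\ge\frac12\psi^n_S(t)$, you have no control whatsoever on $\psi^m_{S_F}(t)$ for the other lengths $m$, hence no lower bound on $P_{S_F}(t)$, which is what the argument needs. The missing ingredient is the BDP-based quasi-supermultiplicativity: since $\|\phi_v^{\prime}\|\le K\,|\phi_v^{\prime}(y)|$ for all $y\in V$, one has $\|\phi_{vw}^{\prime}\|\ge K^{-1}\|\phi_v^{\prime}\|\,\|\phi_w^{\prime}\|$, hence
\[ \psi^{m+n}_{S_F}(t)\;\ge\;K^{-t}\,\psi^{m}_{S_F}(t)\,\psi^{n}_{S_F}(t) \]
for $S$ and uniformly for every subsystem $S_F$. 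With this, $\psi^{kn}_{S_F}(t)\ge K^{-t(k-1)}\bigl(\psi^{n}_{S_F}(t)\bigr)^{k}$, so letting $k\to\infty$ gives $P_{S_F}(t)\ge\frac1n\log\psi^n_{S_F}(t)-\frac{t\log K}{n}$; combining with $\psi^n_{S_F}(t)\ge\frac12\psi^n_S(t)\ge\frac12 e^{nP_S(t)}$ (this last inequality, via Fekete, is where submultiplicativity is genuinely used) and letting $n\to\infty$ yields $\sup_F P_{S_F}(t)\ge P_S(t)$. Two further remarks: the same inequality is also what makes your claim that $P_S(t)=+\infty$ for $t<\theta_S$ (needed for $\gamma\ge\theta_S$) more than ``immediate,'' since submultiplicativity alone does not prevent $\psi^n_S(t)$ from being finite when $\psi^1_S(t)=\infty$; and your proposed Frostman ``bypass'' does not actually bypass anything, because to know which finite $F$ carries a measure of dimension exceeding $t$ you must first bound $h_{S_F}$, equivalently $P_{S_F}(t)$, from below --- the very same problem.
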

\begin{theorem}[\cite{MU} Theorem 3.20]\label{hregularequi}
Let $I$ be infinite and let $S$ be a CIFS. Then, the following conditions are equivalent: 
\begin{enumerate}
\item $S$ is hereditarily regular. 
\item $\psi^{1}_{S}(\theta_{S}) = \infty$. 
\end{enumerate}
Especially, if $S$ is hereditarily regular, we have $ \theta_{S} < h_{S} $.
\end{theorem}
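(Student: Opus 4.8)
The plan is to derive everything from two multiplicative estimates for $\psi^n_S$, one of which is the Bounded Distortion Property, and then to locate the zero of the pressure function by its shape. Writing $\|\phi'_w\| := \sup_{z\in X}|\phi'_w(z)|$, the chain rule gives $\|\phi'_{uv}\|\le\|\phi'_u\|\,\|\phi'_v\|$, whence $\psi^{m+n}_S(t)\le\psi^m_S(t)\psi^n_S(t)$, i.e. Proposition \ref{logsubadditive}; the BDP gives the matching lower bound $\|\phi'_{uv}\|\ge K^{-1}\|\phi'_u\|\,\|\phi'_v\|$, because $|\phi'_u(\phi_v(z))|\ge K^{-1}\|\phi'_u\|$ for every $z\in X$. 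Iterating both over words of length $n$ yields
\[ K^{-(n-1)t}\,\bigl(\psi^1_S(t)\bigr)^n \le \psi^n_S(t) \le \bigl(\psi^1_S(t)\bigr)^n, \]
and hence, on taking $\tfrac1n\log$ and letting $n\to\infty$,
\[ \log\psi^1_S(t) - t\log K \le P_S(t) \le \log\psi^1_S(t) \qquad (t>\theta_S). \]
So on $(\theta_S,\infty)$ the pressure is finite; it is convex (Hölder applied to each $\psi^n_S$) and strictly decreasing (uniform contractivity $\|\phi'_i\|\le c<1$), with $P_S(t)\to-\infty$ as $t\to\infty$. I would also invoke $h_S=\inf\{t:P_S(t)<0\}$ and the uniqueness of a zero of $P_S$ from Theorem \ref{presshaus}.

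For $(2)\Rightarrow(1)$, suppose $\psi^1_S(\theta_S)=\infty$. Since each $\|\phi'_i\|<1$, the terms $\|\phi'_i\|^t$ increase as $t\downarrow\theta_S$, so monotone convergence gives $\psi^1_S(t)\uparrow\infty$, and the lower sandwich bound then forces $P_S(t)\to\infty$ as $t\downarrow\theta_S$. A continuous, strictly decreasing function on $(\theta_S,\infty)$ running from $+\infty$ to $-\infty$ has a unique zero $t_0>\theta_S$; by Theorem \ref{presshaus} this gives $P_S(t_0)=0$, so $S$ is regular with $h_S=t_0>\theta_S$ (this is exactly the ``especially'' clause). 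Finally, deleting a finite set $I\setminus I'$ alters $\psi^1$ only by a finite sum, so $\theta_{S'}=\theta_S$ and $\psi^1_{S'}(\theta_{S'})=\infty$ persist; the same argument makes every cofinite subsystem regular, i.e. $S$ is hereditarily regular.

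For the converse I would argue the contrapositive: assume $\psi^1_S(\theta_S)<\infty$ and exhibit a non-regular cofinite subsystem. As $\sum_{i\in I}\|\phi'_i\|^{\theta_S}$ converges, its tails tend to $0$, so I can pick a cofinite (hence still infinite) $I'\subset I$ with $\psi^1_{S'}(\theta_S)<1$. Deleting finitely many generators preserves the abscissa of convergence, so $\theta_{S'}=\theta_S$, and the upper sandwich bound gives $P_{S'}(\theta_{S'})\le\log\psi^1_{S'}(\theta_{S'})<0$. Since $P_{S'}$ is non-increasing and equals $+\infty$ on $[0,\theta_{S'})$, it never vanishes, so $S'$ is not regular and $S$ is not hereditarily regular.

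The multiplicative estimates and the convexity/monotonicity of $P_S$ are routine. The step needing the most care, and where the BDP is genuinely used, is the behaviour of $P_S$ near $\theta_S$: checking that $\psi^1_S(\theta_S)=\infty$ propagates to $P_S(t)\to\infty$ as $t\downarrow\theta_S$ so that a zero is actually forced, and, on the other side, that one can drive $\psi^1_{S'}$ strictly below $1$ by deleting only finitely many maps while holding $\theta$ fixed. Everything else is bookkeeping against the characterization $h_S=\inf\{t:P_S(t)<0\}$ from Theorem \ref{presshaus}.
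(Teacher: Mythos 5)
The paper itself never proves this statement: it is quoted as Theorem 3.20 of Mauldin--Urba\'{n}ski, and Section 2 explicitly presents that material ``without proofs,'' so the only meaningful comparison is with the original source. Your argument is correct and is essentially the standard Mauldin--Urba\'{n}ski proof: the BDP sandwich $\log\psi^{1}_{S}(t) - t\log K \le P_{S}(t) \le \log\psi^{1}_{S}(t)$, the divergence $\psi^{1}_{S}(t)\uparrow\infty$ as $t\downarrow\theta_{S}$ forcing a zero $t_{0}>\theta_{S}$ of the continuous, strictly decreasing pressure (which also yields the ``especially'' clause), the invariance of $\theta$ and of $\psi^{1}(\theta)=\infty$ under deletion of finitely many maps, and, for the converse, the choice of a cofinite subsystem with $\psi^{1}_{S'}(\theta_{S})<1$ so that $P_{S'}$ never vanishes.
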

\begin{theorem}[\cite{MU} Theorem 4.5] \label{HAUSUPPESTIMATE}
Let $S$ be a regular CIFS and $\lambda_{d}$ be the $d$-dimensional Lebesgue measure. 
If $\lambda_{d}(\inter(X) \setminus X_{1}) > 0$, then $h_{S} < d$. 
Here, $X_{1} := \cup_{i \in I} \phi_{i}(X)$. 
\end{theorem}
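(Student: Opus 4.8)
The plan is to prove the inequality directly, by establishing that the pressure is already strictly negative at the ambient dimension, namely $P_{S}(d) < 0$, and then to feed this into Theorem \ref{presshaus}. Since $J_{S} \subset X \subset \mathbb{R}^{d}$ we trivially have $h_{S} \le d$, so the whole content is the strictness. Once $P_{S}(d) < 0$ is in hand, I would finish as follows: the function $P_{S}$ is non-increasing, because uniform contractivity gives $\sup_{X}|\phi_{w}'| \le c^{n} < 1$, so raising to a larger exponent only decreases each summand of $\psi_{S}^{n}$; and by regularity together with Theorem \ref{presshaus} the number $h_{S}$ is the unique zero of $P_{S}$. Comparing $P_{S}(h_{S}) = 0 > P_{S}(d)$ with monotonicity then forces $h_{S} < d$.

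The geometric heart of the matter is to control the volumes of the sets $X_{n} := \bigcup_{w \in I^{n}} \phi_{w}(X)$, which are nested, $X_{n+1} \subset X_{n} \subset X_{1}$. Two ingredients convert derivative sums into volumes. First, conformality gives the change-of-variables identity $\lambda_{d}(\phi_{w}(A)) = \int_{A} |\phi_{w}'|^{d}\, d\lambda_{d}$ for every Borel $A \subset X$, since the Jacobian of a conformal map on $\mathbb{R}^{d}$ equals $|\phi_{w}'|^{d}$. Second, the OSC implies, by induction on $n$, that the sets $\phi_{w}(\inter(X))$, $w \in I^{n}$, are pairwise disjoint, so summing the identity over $w \in I^{n}$ yields $\sum_{w \in I^{n}} \lambda_{d}(\phi_{w}(X)) = \lambda_{d}(X_{n})$, the boundary overlaps being $\lambda_{d}$-null. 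Combining these with the BDP, which gives $\sup_{X}|\phi_{w}'|^{d} \le K^{d} \inf_{X}|\phi_{w}'|^{d}$, I obtain the two-sided comparison $\lambda_{d}(X)^{-1}\lambda_{d}(X_{n}) \le \psi_{S}^{n}(d) \le K^{d}\lambda_{d}(X)^{-1}\lambda_{d}(X_{n})$, whence $P_{S}(d) = \lim_{n} \tfrac{1}{n}\log \lambda_{d}(X_{n})$.

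The key step is a uniform geometric decay of $\lambda_{d}(X_{n})$. Set $\rho := \lambda_{d}(X \setminus X_{1})/\lambda_{d}(X)$, which is positive because $\lambda_{d}(X \setminus X_{1}) \ge \lambda_{d}(\inter(X) \setminus X_{1}) > 0$ and $0 < \lambda_{d}(X) < \infty$. Up to $\lambda_{d}$-null overlaps one has the disjoint decomposition $X_{n} \setminus X_{n+1} = \bigcup_{w \in I^{n}} \phi_{w}(X \setminus X_{1})$, so by the change of variables and the BDP,
\[ \lambda_{d}(X_{n} \setminus X_{n+1}) = \sum_{w \in I^{n}} \int_{X \setminus X_{1}} |\phi_{w}'|^{d}\, d\lambda_{d} \ge \frac{\rho}{K^{d}} \sum_{w \in I^{n}} \lambda_{d}(\phi_{w}(X)) = \frac{\rho}{K^{d}}\, \lambda_{d}(X_{n}), \]
where the middle inequality uses $\int_{X \setminus X_{1}}|\phi_{w}'|^{d}\, d\lambda_{d} \ge \inf_{X}|\phi_{w}'|^{d}\,\lambda_{d}(X \setminus X_{1}) \ge K^{-d}\sup_{X}|\phi_{w}'|^{d}\,\lambda_{d}(X \setminus X_{1}) \ge K^{-d}\rho\,\lambda_{d}(\phi_{w}(X))$. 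Hence $\lambda_{d}(X_{n+1}) \le (1 - \rho K^{-d})\,\lambda_{d}(X_{n})$ with $1 - \rho K^{-d} \in (0,1)$, giving exponential decay of $\lambda_{d}(X_{n})$ and therefore $P_{S}(d) \le \log(1 - \rho K^{-d}) < 0$, as required.

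I expect the main obstacle to be precisely this decay estimate: its content is that the fixed positive proportion of volume lost at the first level, measured by $\rho$, must be lost uniformly inside every cylinder $\phi_{w}(X)$, and it is the BDP that allows one to transfer this proportion from the base set $X \setminus X_{1}$ to an arbitrary word $w$ with a loss factor no worse than $K^{-d}$. The remaining care is bookkeeping: verifying that the OSC really yields pairwise disjointness of all level-$n$ cylinder interiors and that the boundary overlaps are $\lambda_{d}$-null, so that the volume identity $\sum_{w} \lambda_{d}(\phi_{w}(X)) = \lambda_{d}(X_{n})$ and the decomposition of $X_{n} \setminus X_{n+1}$ hold exactly.
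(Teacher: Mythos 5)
This statement is quoted by the paper from \cite{MU} (Theorem 4.5) without proof, so there is no in-paper argument to compare against; measured against the standard (Mauldin--Urba\'{n}ski) proof, your proposal follows essentially the same route and its main thrust is correct: convert $\psi_{S}^{n}(d)$ into Lebesgue volumes of the level-$n$ sets via the conformal change of variables and the BDP, use the OSC to make the level-$n$ pieces essentially disjoint, extract geometric decay of the volumes from the gap $\lambda_{d}(\inter(X)\setminus X_{1})>0$, conclude $P_{S}(d)<0$, and finish with monotonicity of the pressure together with Theorem \ref{presshaus} (for a regular system $h_{S}$ is the unique zero of $P_{S}$).

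The one place where your ``bookkeeping'' hides a real issue is the repeated claim that boundary overlaps are $\lambda_{d}$-null, which you use both for the identity $\sum_{w\in I^{n}}\lambda_{d}(\phi_{w}(X))=\lambda_{d}(X_{n})$ and for the decomposition of $X_{n}\setminus X_{n+1}$. That claim amounts to $\lambda_{d}(\partial X)=0$, which is \emph{not} automatic for a compact connected $X\subset\mathbb{R}^{d}$: such a set can have boundary of positive measure. What rescues it is the Cone Condition, which you never invoke: every $x\in\partial X$ is the vertex of an open cone contained in $\inter(X)$, so no point of $\partial X$ is a Lebesgue density point of $\partial X$, and the Lebesgue density theorem then forces $\lambda_{d}(\partial X)=0$. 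Alternatively, you can avoid the issue entirely by running the argument with the open sets $X_{n}^{o}:=\bigcup_{w\in I^{n}}\phi_{w}(\inter(X))$: by the OSC these unions are \emph{exactly} disjoint over $w\in I^{n}$ and exactly nested in $n$, one has $\phi_{w}(\inter(X))\cap X_{n+1}^{o}=\phi_{w}(X_{1}^{o})$ with $X_{1}^{o}:=\bigcup_{i\in I}\phi_{i}(\inter(X))$, and the hypothesis already concerns $\inter(X)\setminus X_{1}$; this yields $\psi_{S}^{n}(d)\leq K^{d}\lambda_{d}(X_{n}^{o})/\lambda_{d}(\inter(X))$ and $\lambda_{d}(X_{n+1}^{o})\leq\bigl(1-K^{-d}\rho'\bigr)\lambda_{d}(X_{n}^{o})$ with $\rho':=\lambda_{d}(\inter(X)\setminus X_{1})/\lambda_{d}(\inter(X))>0$, with no null-set caveats anywhere. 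With either repair your argument is complete.
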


We now consider families of CIFSs. 
Let CIFS($X$, $I$) be the family of all CIFSs with $X \subset \mathbb{C}$ and an infinite alphabet $I$. 
For each $S \in \text{CIFS($X$,$I$)}$, let $\pi_{S} \colon I^{\infty} \to X$ be the coding map of $S$.  
In this paper, for any sequence $\{ S^{n} \}_{n \in \mathbb{N}}$ in $\text{CIFS($X$,$I$)}$ and $S \in \text{CIFS($X$,$I$)}$, we write $ \lambda(\{ S^{n} \}_{n \in \mathbb{N}}) = S $ if the following conditions are satisfied. 
\begin{enumerate} 
\item[(L1)] For every $i \in I$, $ \lim_{n \to \infty} ( ||\phi_{i}^{n} - \phi_{i}|| + ||(\phi_{i}^{n})^{\prime} - (\phi_{i})^{\prime}|| ) = 0$. 
\item[(L2)] There exist $C > 0$, $M \in \mathbb{N}$ and a finite set $F \subset I$ such that for all $i \in I \setminus F$ and $n \geq M$, $ | \ \log||(\phi_{i}^{n})^{\prime}|| - \log||\phi_{i}^{\prime}|| \ | \leq C $.
\end{enumerate}
Here, we write $ S^{n}$ as $\{ \phi_{i}^{n} \}_{i \in I} $ and $S$ as $\{ \phi_{i} \}_{i \in I} $, and we set $||\phi_{i}^{\prime}|| := \sup_{z \in X} |\phi_{i}^{\prime}(z)|$, $||\phi_{i}^{n} - \phi_{i}|| := \sup_{z \in X} |\phi_{i}^{n}(z) - \phi_{i}(z)|$ and $||(\phi_{i}^{n})^{\prime} - (\phi_{i})^{\prime}|| := \sup_{z \in X}|(\phi_{i}^{n})^{\prime}(z) - (\phi_{i})^{\prime}(z)|$. 
If a sequence $\{ S^{n} \}_{n \in \mathbb{N}}$ in $\text{CIFS($X$,$I$)}$ does not admit any $S \in \text{CIFS($X$,$I$)}$ for which the above conditions are fulfilled, we declare that $\lambda(\{ S^{n} \}_{n \in \mathbb{N}}) = \emptyset$. 
A sequence $\{ S^{n} \}_{n \in \mathbb{N}} \in \text{CIFS($X$, $I$)}^{\mathbb{N}}$ is called $\lambda$-converging if $\lambda(\{ S^{n} \}_{n \in \mathbb{N}}) \in \text{CIFS($X$, $I$)}$. 
We endow CIFS($X$,$I$) with the $\lambda$-topology (\cite{RU}). 

\begin{definition} \label{PRACIFS}
Let $\Lambda$ be an open and connected subset of $\mathbb{C}$. 
Let $\{ S^{\mu} \}_{\mu \in \Lambda}$ be a family of elements of CIFS($X$, $I$). 
We write $ S^{\mu}$ as $\{ \phi_{i}^{\mu} \}_{i \in I} $. 
We say that $\{ S^{\mu} \}_{\mu \in \Lambda}$ is plane-analytic if for all $x \in X$ and $i \in I$, $\mu \mapsto \phi_{i}^{\mu}(x)$ is holomorphic in $\Lambda$. 

Moreover, we say that plane-analytic $\{ S^{\mu} \}_{\mu \in \Lambda}$ is regularly plane-analytic if there exists $\mu_{0} \in \Lambda$ such that the following conditions are satisfied. 
\begin{enumerate}
\item $S^{\mu_{0}}$ is strongly regular. 
\item There exists $\eta \in (0, 1)$ such that for all $w \in I^{\infty}$ and $\mu \in \Lambda$, $|\kappa_{w}^{\mu_{0}}(\mu)-1| \leq \eta$. 
Here, for each $\mu_{0} \in \Lambda$ and $w = w_{1}w_{2} \cdots \in I^{\infty}$, we set $\pi_{\mu} := \pi_{S_{\mu}}$ and 
\[ \kappa_{w}^{\mu_{0}}(\mu) := \frac{(\phi^{\mu}_{w_{1}})^{\prime}(\pi_{\mu}(\sigma w))}{(\phi^{\mu_{0}}_{w_{1}})^{\prime}(\pi_{\mu_{0}}(\sigma w))} \quad (\mu \in \Lambda). \]
\end{enumerate}
\end{definition}
Roy and Urba\'{n}ski showed the following results \cite{RU}.
\begin{theorem}[\cite{RU} Theorem 5.10]\label{LMDACONTI}
The Hausdorff dimension function $h \colon \text{CIFS($X$, $I$)} \to [0, \infty)$, $S \mapsto h_{S}$, is continuous when CIFS($X$, $I$) is endowed with the $\lambda$-topology. 
\end{theorem}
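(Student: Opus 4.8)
The plan is to prove continuity by establishing sequential continuity with respect to $\lambda$-convergence: since the $\lambda$-topology is by construction a sequential topology (its closed sets are exactly those stable under $\lambda$-limits of sequences), it suffices to fix a $\lambda$-converging sequence $\{S^{n}\}_{n}$ with $\lambda(\{S^{n}\}_{n}) = S$ and show $h_{S^{n}} \to h_{S}$. Writing $S^{n} = \{\phi_{i}^{n}\}_{i\in I}$ and $S = \{\phi_{i}\}_{i\in I}$, I would split the claim into a lower estimate on $\liminf_{n} h_{S^{n}}$ and an upper estimate on $\limsup_{n} h_{S^{n}}$, handled by different mechanisms that reflect the two defining conditions (L1) and (L2) of $\lambda$-convergence.

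For the lower estimate $\liminf_{n} h_{S^{n}} \geq h_{S}$ I would use the finite-approximation formula $h_{S} = \sup\{h_{S_{F}} : F \in F(I)\}$ from Theorem \ref{presshaus}. Fixing $F \in F(I)$, condition (L1) says each $\phi_{i}^{n} \to \phi_{i}$ together with its derivative uniformly on $X$, so the finite subsystems $S_{F}^{n} := \{\phi_{i}^{n}\}_{i\in F}$ converge to $S_{F}$ in the $C^{1}$ sense. For finite CIFSs the Hausdorff dimension depends continuously on the generators (the finite pressure is everywhere real-valued, continuous, and strictly decreasing in $t$ with a unique zero), so $h_{S_{F}^{n}} \to h_{S_{F}}$. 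Since $h_{S^{n}} = \sup_{F'} h_{(S^{n})_{F'}} \geq h_{S_{F}^{n}}$ for every $n$, taking $\liminf$ and then the supremum over $F$ yields $\liminf_{n} h_{S^{n}} \geq \sup_{F} h_{S_{F}} = h_{S}$. This half uses only (L1).

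For the upper estimate $\limsup_{n} h_{S^{n}} \leq h_{S}$, which is the crux, I would fix an arbitrary $t > h_{S}$ and show $h_{S^{n}} \leq t$ for all large $n$. Since $t > h_{S} \geq \theta_{S}$ we have $\psi_{S}^{1}(t) < \infty$, and since $h_{S} = \inf\{s : P_{S}(s) < 0\}$ with $P_{S}$ non-increasing we get $P_{S}(t) < 0$; because $P_{S}(t) = \inf_{k} \tfrac{1}{k}\log\psi_{S}^{k}(t)$ by Proposition \ref{logsubadditive} and Fekete's lemma, there is a fixed length $k_{0}$ with $\psi_{S}^{k_{0}}(t) < 1$. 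The key is then to prove $\psi_{S^{n}}^{k_{0}}(t) \to \psi_{S}^{k_{0}}(t)$. Termwise convergence $\|(\phi_{w}^{n})'\|^{t} \to \|\phi_{w}'\|^{t}$ for each fixed word $w \in I^{k_{0}}$ follows from (L1) via the chain rule, where $\phi_{w}^{n} = \phi_{w_{1}}^{n}\circ\cdots\circ\phi_{w_{k_{0}}}^{n}$. For a summable majorant I would combine (L2) with the $C^{1}$ convergence of the finitely many exceptional generators in (L1) to produce a single constant $D$ with $\|(\phi_{i}^{n})'\| \leq D\|\phi_{i}'\|$ for all $i \in I$ and all large $n$; the submultiplicativity $\|(\phi_{w}^{n})'\| \leq \prod_{j}\|(\phi_{w_{j}}^{n})'\|$ (chain rule and supremum over $X$) then gives $\|(\phi_{w}^{n})'\|^{t} \leq D^{k_{0}t}\prod_{j}\|\phi_{w_{j}}'\|^{t}$, whose sum over $w \in I^{k_{0}}$ equals $D^{k_{0}t}(\psi_{S}^{1}(t))^{k_{0}} < \infty$ and is independent of $n$. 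Lebesgue's dominated convergence for series then gives $\psi_{S^{n}}^{k_{0}}(t) \to \psi_{S}^{k_{0}}(t) < 1$, so $\psi_{S^{n}}^{k_{0}}(t) < 1$ for large $n$, whence $P_{S^{n}}(t) \leq \tfrac{1}{k_{0}}\log\psi_{S^{n}}^{k_{0}}(t) < 0$ and therefore $h_{S^{n}} \leq t$. Letting $t \downarrow h_{S}$ completes the upper bound, and together with the lower bound this proves $h_{S^{n}} \to h_{S}$.

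The main obstacle is exactly this upper estimate. Without uniform control of the contribution of the infinitely many tail generators, the dimensions of the approximants could jump upward, and pointwise $C^{1}$ convergence (L1) alone is insufficient; the entire purpose of (L2) is to supply the $n$-independent summable majorant that legitimizes passing the limit inside the infinite series defining $\psi^{k_{0}}$. The second subtlety, which makes the dominated-convergence step usable at all, is the reduction from the full pressure (a limit over all word-lengths) to a single finite length $k_{0}$ via $P_{S}(t) = \inf_{k}\tfrac{1}{k}\log\psi_{S}^{k}(t)$, since one cannot expect the convergence $\psi_{S^{n}}^{k}(t) \to \psi_{S}^{k}(t)$ to be uniform in $k$.
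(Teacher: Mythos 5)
You should know at the outset that the paper contains \emph{no proof} of this statement: Theorem \ref{LMDACONTI} is imported verbatim from Roy and Urba\'{n}ski (\cite{RU}, Theorem 5.10), and Section 2 explicitly presents this background material without proofs. So the comparison can only be made against the original argument in \cite{RU} and on the merits of your write-up. On that score your architecture is sound and is essentially the natural (and, in spirit, the original) one. The reduction to sequential continuity is legitimate, since the $\lambda$-topology is by construction the topology whose closed sets are exactly the sets stable under $\lambda$-limits: if $A\subset[0,\infty)$ is closed and $\{S^{n}\}\subset h^{-1}(A)$ has $\lambda(\{S^{n}\})=S$, then $h_{S^{n}}\to h_{S}$ puts $h_{S}\in A$, so $h^{-1}(A)$ is $\lambda$-stable, hence closed. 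Your upper estimate is correct and well executed: the reduction from the pressure to a single word length $k_{0}$ via $P_{S}(t)=\inf_{k}\frac{1}{k}\log\psi_{S}^{k}(t)$ (Proposition \ref{logsubadditive} plus Fekete), the $n$-independent summable majorant $D^{k_{0}t}\prod_{j}\|\phi_{w_{j}}'\|^{t}$ assembled from (L2) off a finite set and from (L1) on it, and dominated convergence; this is precisely the job (L2) exists to do. The lower estimate via $h_{S}=\sup\{h_{S_{F}}\colon F\in F(I)\}$ (Theorem \ref{presshaus}) is also the right mechanism.

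There is, however, one step you assert rather than prove, and the justification you give for it is insufficient: the claim that for finite CIFSs the dimension is continuous under $C^{1}$ convergence of the generators, i.e.\ $h_{S_{F}^{n}}\to h_{S_{F}}$. Your parenthetical (finite pressure is real-valued, continuous, strictly decreasing, with a unique zero) explains why convergence of the pressure functions would force convergence of their zeros; it says nothing about why $P_{S_{F}^{n}}(t)\to P_{S_{F}}(t)$ as the \emph{system} varies. Half of this is free: $P_{S_{F}^{n}}(t)\leq\frac{1}{k}\log\psi_{S_{F}^{n}}^{k}(t)$ together with termwise convergence of the finite sums gives $\limsup_{n}P_{S_{F}^{n}}(t)\leq P_{S_{F}}(t)$; but that is the direction you do not need. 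Your lower bound requires $\liminf_{n}P_{S_{F}^{n}}(t)\geq P_{S_{F}}(t)$, and since the pressure is an infimum over $k$ of $n$-dependent quantities, this is not automatic. It can be repaired as follows: set $\underline{\psi}^{k}(t):=\sum_{w\in F^{k}}\bigl(\inf_{X}|\phi_{w}'|\bigr)^{t}$, which is supermultiplicative in $k$ because $\phi_{w}(X)\subset X$; hence $P_{S_{F}^{n}}(t)\geq\frac{1}{k}\log\underline{\psi}_{S_{F}^{n}}^{k}(t)$ for every $k$ and $n$. For each fixed $k$, (L1) gives $\underline{\psi}_{S_{F}^{n}}^{k}(t)\to\underline{\psi}_{S_{F}}^{k}(t)$, and the BDP of the \emph{limit} system $S_{F}$ (its constant only) identifies $\sup_{k}\frac{1}{k}\log\underline{\psi}_{S_{F}}^{k}(t)$ with $P_{S_{F}}(t)$. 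Then for $t<h_{S_{F}}$ one has $P_{S_{F}}(t)>0$, hence $P_{S_{F}^{n}}(t)>0$ and so $h_{S_{F}^{n}}>t$ for all large $n$, which is what your $\liminf$ step needs. With this (standard, fillable) argument inserted, your proof is complete and correct.
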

\begin{theorem}[\cite{RU} Theorem 6.1]\label{PRATHM}
Let $\Lambda$ be an open and connected subset of $\mathbb{C}$. 
Let $\{ S^{\mu} \}_{\mu \in \Lambda}$ be a family of elements of CIFS($X$, $I$). 
If $\{ S^{\mu} \}_{\mu \in \Lambda}$ is regularly plane-analytic, then $\mu \mapsto h_{S^{\mu}}$ is real-analytic in $\Lambda$. 
\end{theorem}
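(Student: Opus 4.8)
The plan is to reduce the statement to the Bowen-type characterization of the dimension and then to an implicit-function argument driven by spectral perturbation theory. By Theorem \ref{presshaus}, wherever $S^{\mu}$ is regular the dimension $h_{S^{\mu}}$ is the \emph{unique} zero of the pressure function $t \mapsto P_{S^{\mu}}(t)$. Writing $P(\mu,t) := P_{S^{\mu}}(t)$, it therefore suffices to prove two things: that $P$ is jointly real-analytic in $(\mu,t)$ on a neighborhood of $\{(\mu, h_{S^{\mu}}) : \mu \in \Lambda\}$, and that $\partial_{t}P(\mu,t) < 0$ there. Granting these, the real-analytic implicit function theorem solves $P(\mu, t)=0$ for a real-analytic branch $t = t(\mu)$, which by uniqueness of the zero must coincide with $h_{S^{\mu}}$; hence $\mu \mapsto h_{S^{\mu}}$ is real-analytic.

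The mechanism for the analyticity of $P$ is the Perron--Frobenius (transfer) operator. On a suitable Banach space of bounded Hölder functions on $I^{\infty}$ I would introduce, for real $t$ near $h_{S^{\mu_{0}}}$,
\[ (\mathcal{L}_{\mu,t}\, g)(\omega) := \sum_{i \in I} \bigl| (\phi_{i}^{\mu})'(\pi_{\mu}(\omega)) \bigr|^{t}\, g(i\omega), \]
whose leading eigenvalue $\lambda(\mu,t)$ satisfies $\log \lambda(\mu,t) = P(\mu,t)$ by the thermodynamic formalism of \cite{MU}. The point of the regularly plane-analytic hypothesis is precisely that the weight ratios are uniformly controlled: the bound $|\kappa_{w}^{\mu_{0}}(\mu)-1| \leq \eta < 1$ for all $w \in I^{\infty}$ and \emph{all} $\mu \in \Lambda$ compares the single-symbol cocycle weight $|(\phi_{w_{1}}^{\mu})'(\pi_{\mu}(\sigma w))|$ at parameter $\mu$ with the one at $\mu_{0}$ within a factor $(1\pm\eta)$. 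This uniform domination makes every $\mathcal{L}_{\mu,t}$ bounded on the common domain where $\mathcal{L}_{\mu_{0},t}$ is, and—since $\mu \mapsto (\phi_{i}^{\mu})'(\pi_{\mu}(\omega))$ is holomorphic by plane-analyticity and the terms are uniformly summable—it lets me pass holomorphy in $\mu$ (for fixed real $t$) through the infinite sum. In the $t$-variable each summand $e^{t\log|(\phi_{i}^{\mu})'|}$ is real-analytic with the same uniform summability, so $(\mu,t) \mapsto \mathcal{L}_{\mu,t}$ is an analytic family of operators.

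Next I would invoke spectral perturbation theory. At $(\mu_{0}, h_{S^{\mu_{0}}})$ the Ruelle--Perron--Frobenius theorem from \cite{MU} gives that $\lambda(\mu_{0},\cdot)$ is a simple, isolated, dominant eigenvalue of $\mathcal{L}_{\mu_{0},t}$ with a spectral gap; strong regularity of $S^{\mu_{0}}$ guarantees the pressure is finite and behaves well near its zero. By Kato's analytic perturbation theory a simple isolated eigenvalue of an analytic operator family depends analytically on the parameters, so $(\mu,t) \mapsto \lambda(\mu,t)$, and hence $P(\mu,t) = \log\lambda(\mu,t)$, is real-analytic near $(\mu_{0}, h_{S^{\mu_{0}}})$. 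For the monotonicity, differentiating the eigenvalue equation and integrating against the equilibrium state $\tilde{\mu}_{t}$ yields $\partial_{t}P(\mu,t) = \int \log|\phi'|\, d\tilde{\mu}_{t} < 0$, the integrand being negative by uniform contractivity; strong regularity ensures this holds at the zero. The uniform bound over all of $\Lambda$ then lets this local analyticity be repeated at every point of $\Lambda$—the strong-regularity and spectral-gap properties persist because the operators stay uniformly comparable to $\mathcal{L}_{\mu_{0},t}$—so real-analyticity holds throughout $\Lambda$.

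The main obstacle will be the functional-analytic core: choosing a Banach space on which the \emph{infinite} transfer operator is simultaneously bounded, quasi-compact (so that the leading eigenvalue is genuinely isolated with a gap), and analytic in $(\mu,t)$, and then verifying that simplicity and isolation of the dominant eigenvalue survive uniformly across the whole parameter space. For infinite alphabets the summability and the uniform spectral gap are not automatic, and it is exactly the condition $|\kappa_{w}^{\mu_{0}}(\mu)-1|\leq\eta$—uniform in both $w$ and $\mu$—that must be used to secure them; keeping that uniformity while passing holomorphy through the infinite sum is the delicate part of the argument.
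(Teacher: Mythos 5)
This statement is quoted in the paper from Roy--Urba\'{n}ski \cite{RU} without proof, so there is no in-paper argument to compare against; your proposal has to be judged against the actual proof in \cite{RU}, whose broad architecture --- Bowen-type characterization of $h_{S^{\mu}}$ as the unique zero of the pressure, transfer operators whose leading eigenvalue gives the pressure, perturbation of a simple isolated eigenvalue, and a real-analytic implicit function theorem using $\partial_{t}P<0$ --- your outline does reproduce.

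However, there is a genuine gap at the crucial analyticity step. The transfer-operator weight is $\bigl|(\phi_{i}^{\mu})'(\pi_{\mu}(\omega))\bigr|^{t}$, the \emph{modulus} of a holomorphic function raised to a real power, and a modulus is never holomorphic in $\mu$ unless the function is constant. So your claim that plane-analyticity of $\mu \mapsto (\phi_{i}^{\mu})'(\pi_{\mu}(\omega))$ ``lets me pass holomorphy in $\mu$ through the infinite sum'' fails as stated: the summands are not holomorphic in $\mu$, only real-analytic in $(\Re\mu,\Im\mu)$ (because the derivative is non-vanishing), and real-analyticity does not survive infinite summation without uniform holomorphic extensions to a complex neighborhood. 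The missing idea --- which is the heart of the argument in \cite{RU}, following the earlier real-analyticity results of Mauldin and Urba\'{n}ski --- is complexification: one writes $|F(\mu)|^{t} = \bigl(F(\mu)\,\overline{F(\mu)}\bigr)^{t/2}$, introduces the conjugate-holomorphic family $G(\mu) := \overline{F(\overline{\mu})}$, and regards the operators with weights $\bigl(F(\mu_{1})G(\mu_{2})\bigr)^{t/2}$ as a holomorphic family in \emph{two} complex variables $(\mu_{1},\mu_{2})$ near the totally real diagonal $\mu_{2}=\overline{\mu_{1}}$; Kato's perturbation theory is applied to this complexified family, and restriction to the diagonal yields real-analyticity. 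This is also where the hypothesis $|\kappa_{w}^{\mu_{0}}(\mu)-1|\leq \eta<1$ does its real work: it confines the ratio weights to the disk of radius $\eta$ about $1$, hence away from $0$ and away from any branch cut, so that $z\mapsto z^{t/2}$ admits a single-valued holomorphic branch on the relevant domain and the complexified operators form a well-defined, uniformly bounded holomorphic family --- not merely the ``uniform domination/summability'' role you assign to it. Your closing paragraph correctly flags the Banach-space and spectral-gap issues for infinite alphabets, but the non-holomorphy of the moduli is the obstacle that would actually break your proof as written.
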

\begin{theorem}[\cite{RU} Theorem 6.3] \label{PATHM}
Let $\Lambda$ be an open and connected subset of $\mathbb{C}$. 
Let $\{ S^{\mu} \}_{\mu \in \Lambda}$ be a family of elements of CIFS($X$, $I$). 
If $\{ S^{\mu} \}_{\mu \in \Lambda}$ is plane-analytic, then $\mu \mapsto 1/h_{S^{\mu}}$ is superharmonic in $\Lambda$. 
\end{theorem}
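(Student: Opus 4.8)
The plan is to reduce the statement to finite subsystems and then to a single explicit second-order computation. By Theorem \ref{presshaus}, $h_{S^{\mu}}=\sup\{h_{S_{F}^{\mu}}\mid F\in F(I)\}$, and $h_{S_{F}^{\mu}}$ is nondecreasing in $F$ (here $S_{F}^{\mu}$ denotes the restriction of $S^{\mu}$ to $F$). Fixing an exhausting sequence $F_{1}\subset F_{2}\subset\cdots$ with $\bigcup_{k}F_{k}=I$, I would use $h_{S_{F_{k}}^{\mu}}\uparrow h_{S^{\mu}}$, hence $1/h_{S_{F_{k}}^{\mu}}\downarrow 1/h_{S^{\mu}}$ pointwise on $\Lambda$. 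Since Hausdorff dimensions lie in $(0,2]$, this decreasing limit is bounded below by a positive constant and in particular is not identically $-\infty$. Therefore, granting that each $\mu\mapsto 1/h_{S_{F_{k}}^{\mu}}$ is superharmonic, the classical fact that a decreasing limit of superharmonic functions is superharmonic yields the theorem. So it suffices to treat one fixed finite $F$.

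For the finite system $S_{F}^{\mu}$ I would fix a base point $z_{0}\in X$ and set $u_{w}(\mu):=\log|(\phi_{w}^{\mu})'(z_{0})|$ for $w\in F^{n}$. This is where plane-analyticity enters: since $\mu\mapsto\phi_{i}^{\mu}(x)$ is holomorphic for each fixed $x$ and each $\phi_{i}^{\mu}$ is conformal, Hartogs' theorem makes $(\mu,x)\mapsto\phi_{i}^{\mu}(x)$ jointly holomorphic, so by the chain rule $\mu\mapsto(\phi_{w}^{\mu})'(z_{0})$ is holomorphic and nonvanishing; consequently each $u_{w}$ is harmonic in $\mu$. I would then consider the free energy $\Phi_{n}(s,\mu):=\log\sum_{w\in F^{n}}e^{s\,u_{w}(\mu)}$, which for fixed $\mu$ is continuous, strictly decreasing and convex in $s$, equals $\log(|F|^{n})>0$ at $s=0$ and tends to $-\infty$ as $s\to\infty$, hence has a unique zero $s_{n}(\mu)>0$. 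Using BDP to compare $\sum_{w}e^{s u_{w}}$ with $\psi^{n}_{S_{F}^{\mu}}(s)$ up to a bounded factor that is uniform on compact subsets of $\Lambda$, one gets $s_{n}\to h_{S_{F}^{\mu}}$ uniformly on compacta; since a uniform limit of superharmonic functions is superharmonic, it is enough to prove that each $1/s_{n}$ is superharmonic.

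The core is this last point. For fixed $s$, the map $\mu\mapsto\Phi_{n}(s,\mu)$ is a log–sum–exp of the harmonic functions $s\,u_{w}$; writing $p_{w}\propto e^{s u_{w}}$ and $\langle\,\cdot\,\rangle$ for the associated weighted average, the identity $\Delta_{\mu}u_{w}=0$ gives $\Delta_{\mu}\Phi_{n}(s,\cdot)=s^{2}\,\mathrm{Var}_{p}(\nabla u)\ge 0$, so $\Phi_{n}(s,\cdot)$ is subharmonic. Differentiating the identity $\Phi_{n}(s_{n}(\mu),\mu)\equiv 0$ once determines $\nabla s_{n}$, and differentiating twice and summing yields, after simplification,
\[ \Delta_{\mu}\!\left(\tfrac{1}{s_{n}}\right)=\frac{\mathrm{Var}(u)\,|\langle\nabla u\rangle|^{2}}{\langle u\rangle^{3}}-\frac{2\,\mathrm{Cov}(u,\nabla u)\cdot\langle\nabla u\rangle}{\langle u\rangle^{2}}+\frac{\mathrm{Var}(\nabla u)}{\langle u\rangle}. \]
Since every $\phi_{w}^{\mu}$ is a contraction, $\langle u\rangle<0$, so the claim $\Delta_{\mu}(1/s_{n})\le 0$ is equivalent to nonnegativity of the quadratic $\mathrm{Var}(\nabla u)\,m^{2}-2\,(\mathrm{Cov}(u,\nabla u)\cdot\langle\nabla u\rangle)\,m+\mathrm{Var}(u)\,|\langle\nabla u\rangle|^{2}$ in $m=\langle u\rangle$. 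Its discriminant is nonpositive because the componentwise Cauchy–Schwarz inequality for covariances gives $|\mathrm{Cov}(u,\nabla u)|^{2}\le\mathrm{Var}(u)\,\mathrm{Var}(\nabla u)$, which with Cauchy–Schwarz in $\mathbb{R}^{2}$ bounds the cross term. Hence $1/s_{n}$ is superharmonic.

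The main obstacle is exactly the sign in the displayed Laplacian. It is not a formal consequence of ``$\Phi_{n}$ subharmonic in $\mu$ and convex decreasing in $s$''; it relies on the fact that the same harmonic data $u_{w}$ drive both the $\mu$-variation and the $s$-variation, so that the cross term is governed by $\mathrm{Cov}(u,\nabla u)$ and the favourable sign is supplied by Cauchy–Schwarz. The remaining points are softer: the uniform-on-compacta convergence $s_{n}\to h_{S_{F}^{\mu}}$ (so the $n\to\infty$ limit preserves superharmonicity) and the positive lower bound on $h$ (so the decreasing limit over $F_{k}\uparrow I$ is finite and superharmonic), both of which follow from BDP and the continuity of the family.
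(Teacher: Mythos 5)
This statement is quoted in the paper from \cite{RU} (Theorem 6.3) and, as Section 2 announces, is given there without proof; so your attempt has to be measured against Roy--Urba\'nski's own argument rather than anything internal to this paper.

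Your proposal contains one genuine error, and it sits at the final reduction: the ``classical fact that a decreasing limit of superharmonic functions is superharmonic'' is false as stated. The correct classical statements go the other way: \emph{increasing} limits of superharmonic functions are superharmonic, and \emph{decreasing} limits of subharmonic functions are subharmonic. For a decreasing sequence of superharmonic functions the super-mean-value inequality does pass to the limit by monotone convergence, but lower semicontinuity --- part of the definition of superharmonicity --- can be destroyed: on the unit disk the functions $v_{k}(z) := \min\{\log(1/|z|),\, k\}/k$ are superharmonic and decrease to the function equal to $1$ at the origin and $0$ elsewhere, which satisfies the super-mean inequality everywhere but is not lower semicontinuous, hence not superharmonic. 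In your setting $1/h_{S^{\mu}} = \inf_{k} 1/h_{S^{\mu}_{F_{k}}}$ is an infimum of continuous functions, hence upper semicontinuous, and the lower semicontinuity you need is equivalent to upper semicontinuity of $\mu \mapsto h_{S^{\mu}}$; but the representation $h_{S^{\mu}} = \sup_{F} h_{S^{\mu}_{F}}$ gives \emph{lower} semicontinuity of $h$, the wrong direction. Plane-analyticity alone does not supply continuity of $h_{S^{\mu}}$ (the continuity results in this circle of ideas, such as Theorem \ref{LMDACONTI}, require $\lambda$-convergence, which is an extra hypothesis --- in this paper's concrete family it is verified separately in the proof of Theorem \ref{main1}), so your last step has a real hole in the generality of the statement; at best you obtain superharmonicity of the lower semicontinuous regularization. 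Your other soft claims are repairable: for fixed finite $F$, joint holomorphy (via Hartogs, as you say) plus compactness makes the contraction and distortion constants locally uniform in $\mu$, and pointwise convergence $s_{n} \to h_{S^{\mu}_{F}}$ with uniform two-sided bounds, together with continuity of the limit for finite systems, suffices in place of uniform convergence.

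As for the route itself: it is genuinely different from, though secretly close to, the actual Roy--Urba\'nski proof, which (following Mauldin--Urba\'nski's treatment of analytic families) uses Bowen's formula in variational form, $1/h_{S^{\mu}} = \inf_{\nu} \lambda_{\nu}(\mu)/h_{\nu}(\sigma)$ over shift-invariant measures of finite entropy, together with the fact that each Lyapunov exponent $\mu \mapsto \lambda_{\nu}(\mu)$ is harmonic for a plane-analytic family; $1/h$ is then an infimum of positive harmonic functions. Your finite-level core is the infinitesimal shadow of this: by Legendre duality, the Bowen equation $\Phi_{n}(s_{n}(\mu),\mu)=0$ gives $1/s_{n}(\mu) = \inf_{p} \bigl( -\sum_{w} p_{w} u_{w}(\mu) \bigr)/H(p)$, an infimum of harmonic functions of $\mu$ (here $H(p)$ is the entropy of the probability vector $p$), which yields the super-mean inequality with no Laplacian computation at all, while continuity of $s_{n}$ from the implicit function theorem gives lower semicontinuity. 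So your Cauchy--Schwarz discriminant argument, while consistent with this (I checked it degenerates correctly when all $u_{w}$ coincide), is far more laborious than necessary --- and the duality picture also makes transparent why the lower-semicontinuity problem flagged above reappears at every ``infimum'' stage and cannot be waved away by a limit lemma.
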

\section{CIFSs of generalized complex continued fractions}
In this section, we prove some properties of the CIFSs of generalized complex continued fractions \cite{T}. 
Note that they are important and interesting examples of infinite CIFSs. 
We introduce some additional notations. 
For each $\tau \in A_{0}$, we set $\pi_{\tau} := \pi_{S_{\tau}}$, $\theta_{\tau} := \theta_{S_{\tau}}$, $\psi_{\tau}^{n}(t):= \psi_{S_{\tau}}^{n}(t) \quad (t \geq 0, n \in \mathbb{N})$ and $P_{\tau}(t) := P_{S_{\tau}}(t) $ \quad $(t \geq 0)$. 

\begin{proposition}\label{GCCFisCIFS}
For all $\tau \in A_{0}$, $S_{\tau}$ is a CIFS. 
\end{proposition}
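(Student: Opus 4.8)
The plan is to verify the six defining conditions of Definition \ref{CIFSDEF} directly, exploiting that each $\phi_{b}$ is a M\"obius transformation together with a clean description of $X$. First I would record the elementary facts used throughout. For $z \in X$ the inequality $|z-1/2| \le 1/2$ is equivalent to $|z|^{2} \le \mathrm{Re}(z)$, i.e. (for $z \neq 0$) to $\mathrm{Re}(1/z) \ge 1$; in particular $\mathrm{Re}(z) \in [0,1]$ and $\mathrm{Im}(z) \in [-1/2,1/2]$. On the index side, for $b = m+n\tau \in I_{\tau}$ with $\tau = u+iv \in A_{0}$ one has $\mathrm{Re}(b)=m+nu \ge 1$ and $\mathrm{Im}(b)=nv \ge 1$, and for distinct $b,b' \in I_{\tau}$ a short case check gives $|b-b'| \ge 1$ (if the $n$-coordinates agree the difference is a nonzero integer; otherwise $|\mathrm{Im}(b-b')| \ge v \ge 1$).

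With these facts the self-map property, injectivity, contractivity and conformality are short. Since $\mathrm{Re}(z+b) \ge \mathrm{Re}(b) \ge 1$ for $z \in X$, the characterization above gives $\phi_{b}(z)=1/(z+b) \in X$, so each $\phi_{b}$ maps $X$ into $X$; injectivity is immediate as $\phi_{b}$ is M\"obius. From $\mathrm{Re}(z+b) \ge 1$ and $\mathrm{Im}(z+b) \ge 1/2$ we get $|z+b|^{2} \ge 5/4$, so $|\phi_{b}(x)-\phi_{b}(y)| = |x-y|/(|x+b|\,|y+b|) \le (4/5)|x-y|$, giving uniform contractivity with $c=4/5$. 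For conformality I would fix $\alpha \in (1/2,1)$ and set $V := \{ z : \mathrm{Re}(1/z) > \alpha \}$, the open disk of radius $1/(2\alpha)$ tangent to the imaginary axis at $0$; then $X \subset V$, and every $\phi_{b}$ is holomorphic on $V$ (its only pole $-b$ satisfies $\mathrm{Re}(-b) \le -1$, hence lies outside the right half-plane containing $V$), so $\phi_{b}$ is a conformal $C^{1+\epsilon}$ diffeomorphism of $V$ onto its image. The cone condition is trivial since $X$ is a round disk. For the open set condition, $z \in \inter(X)$ gives $\mathrm{Re}(z) > 0$, hence $\mathrm{Re}(z+b) > 1$ and $\phi_{b}(z) \in \inter(X)$; and if $\phi_{b}(z)=\phi_{b'}(z')$ with $z,z' \in \inter(X)$ and $b \neq b'$, then $z+b=z'+b'$, so $z-z'=b'-b$, contradicting $|z-z'| < \mathrm{diam}(X)=1 \le |b'-b|$.

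The main work is the bounded distortion property, and the choice of $V$ is what makes it go through. I would first check that $V$ is forward invariant and that the maps contract uniformly on all of $V$: for $z \in V$ one has $\mathrm{Re}(z) > 0$, so $\mathrm{Re}(1/\phi_{b}(z)) = \mathrm{Re}(z+b) > 1 > \alpha$, whence $\phi_{b}(V) \subset V$; moreover $\alpha > 1/2$ forces $\mathrm{Im}(z+b) > 1 - 1/(2\alpha) =: \beta > 0$, so $|z+b|^{2} > 1+\beta^{2}$ and $\sup_{z \in V}|\phi_{b}'(z)| \le c' := (1+\beta^{2})^{-1} < 1$ uniformly in $b$. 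Then, writing $\xi_{k}^{x} := \phi_{w_{k+1}\cdots w_{n}}(x) \in V$ (with $\xi_{n}^{x}=x$) and using $\phi_{b}'(z)=-1/(z+b)^{2}$, the chain rule gives for $w=w_{1}\cdots w_{n} \in I^{n}$ and $x,y \in V$
\[
\frac{|\phi_{w}'(x)|}{|\phi_{w}'(y)|} = \prod_{k=1}^{n} \left| \frac{\xi_{k}^{y}+w_{k}}{\xi_{k}^{x}+w_{k}} \right|^{2}.
\]
Forward invariance keeps every $\xi_{k}^{x} \in V$, so $|\xi_{k}^{x}+w_{k}| > 1$ and $|\xi_{k}^{x}-\xi_{k}^{y}| \le (c')^{\,n-k}\,\mathrm{diam}(V)$; hence each factor is at most $\bigl(1+(c')^{\,n-k}\mathrm{diam}(V)\bigr)^{2}$ and
\[
\frac{|\phi_{w}'(x)|}{|\phi_{w}'(y)|} \le \prod_{j=0}^{\infty} \bigl(1+(c')^{\,j}\,\mathrm{diam}(V)\bigr)^{2} =: K < \infty,
\]
the product converging because $\sum_{j}(c')^{\,j} < \infty$. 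This $K$ is independent of $w,x,y$, which is the BDP.

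The genuinely delicate point is precisely this last step: one must produce a single neighborhood $V$ on which the maps are simultaneously holomorphic, forward invariant and uniformly contracting, and it is the range $\alpha \in (1/2,1)$ that secures all three at once (a half-plane choice of $V$ would give holomorphy but fail the uniform contraction needed for the telescoping product). Everything else reduces to the two bookkeeping inequalities $\mathrm{Re}(b) \ge 1$, $|b-b'| \ge 1$ together with the identity $X\setminus\{0\} = \{\mathrm{Re}(1/z) \ge 1\}$. I note that the BDP could alternatively be derived from the Koebe distortion theorem applied to the univalent iterates $\phi_{w}$ on $V$, but for these M\"obius maps the explicit product estimate above is self-contained.
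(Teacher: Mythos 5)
Your verifications of the self-map property, injectivity, uniform contractivity, the open set condition and the cone condition are correct, and your telescoping-product argument for the BDP is in itself a valid alternative to a Koebe-type argument. However, there is a genuine gap at the foundation of the conformality/BDP part: your set $V := \{ z : \mathrm{Re}(1/z) > \alpha \}$ does \emph{not} contain $X$. As you note yourself, $V$ is the open disk of radius $1/(2\alpha)$ tangent to the imaginary axis at $0$; hence $0 \in \partial V$, while $0 \in X$ (indeed $|0-1/2| = 1/2$). The equivalence $|z-1/2|\le 1/2 \Leftrightarrow \mathrm{Re}(1/z)\ge 1$ holds only for $z \neq 0$, and that exceptional point is exactly where your construction breaks down: condition 3 of Definition \ref{CIFSDEF} requires an open connected $V$ with $X \subset V$, and condition 5 (BDP) is quantified over points of that same $V$. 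With your choice, conformality is not verified, and all of your distortion estimates are carried out on a set that misses a point of $X$. The assertion ``then $X \subset V$'' is false as written.

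The gap is repairable without abandoning your strategy: replace $V$ by $\tilde{V} := V \cup B(0,\delta)$ with $\delta < \min\{1-\alpha,\ 1-1/\sqrt{2}\}$. Then $X \subset \tilde{V}$, $\tilde{V}$ is open and connected, each $\phi_{b}$ is holomorphic on $\tilde{V}$ (its pole $-b$ has real part $\le -1$), and forward invariance persists, since $z \in \tilde{V}$ gives $\mathrm{Re}(z+b) > 1-\delta > \alpha$, i.e. $\phi_{b}(\tilde{V}) \subset V \subset \tilde{V}$; the lower bounds feeding your product estimate survive with adjusted constants ($|z+b|^{2} > 2(1-\delta)^{2} > 1$ on $B(0,\delta)$, and $|\xi + w_{k}| > 1-\delta$ for $\xi \in \tilde{V}$), so the same telescoping product yields the BDP on $\tilde{V}$. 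For comparison, the paper sidesteps the issue by taking a disk \emph{concentric} with $X$, namely $V' = B(1/2, 1/2+\epsilon)$ with $\epsilon < 1/12$, proving $\phi_{b}(V') \subset V'$ and $\sup_{V'}|\phi_{b}'| \le 72/73 < 1$ by direct computation, and then deducing distortion from the Koebe distortion theorem. Once transplanted to a correct $V$, your product argument has the merit of explicitly bounding the distortion of every word $\phi_{w}$, $w \in I^{*}$, which is what condition 5 literally demands, whereas the paper's write-up states its Koebe estimate only for single letters $b \in I_{\tau}$ and leaves the (routine) extension to words implicit.
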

\begin{proof}
Let $\tau \in A_{0}$. Firstly, we show that for all $b \in I_{\tau}$, $\phi_{b}(X) \subset X$. 
Let $Y := \{ z \in \mathbb{C} |\ \Re z \geq 1 \}$ and let $f \colon \hat{\mathbb{C}} \to \hat{\mathbb{C}} $ be the M\"{o}bius transformation defined by $f(z) := 1/z$. 
Since $f(0) = \infty$, $f(1) = 1$, $f(1/2+i/2) = 2/(1+i) = (1-i)$, we have $f(\partial X) = \partial Y \cup \{ \infty \} $.
Moreover, since f(1/2) = 2, we have $f(X) = Y \cup \{ \infty \}$. 
Thus, $f \colon X \to Y \cup \{ \infty \}$ is a homeomorphism. 
Let $g_{b} \colon X \to Y$ be the map defined by $g_{b}(z) := z+b$. 
We deduce that $\phi_{b} = f^{-1} \circ g_{b}$ and $\phi_{b}(X) \subset f^{-1}(Y) \subset X$. 
Therefore, we have proved $\phi_{b}(X) \subset X$. 

We next show that for each $\tau \in A_{0}$, $S_{\tau}$ satisfies the conditions of Definition \ref{CIFSDEF}. 

1. Injectivity. \\
Since each $\phi_{b}$ is a M\"{o}bius transformation, each $\phi_{b}$ is injective. 

2. Uniform Contractivity. \\
Let $b=m+n\tau (=m+nu+inv)$ be an element of $I_{\tau}$ and let $z=x+iy$ and $z^{\prime}=x^{\prime}+iy^{\prime}$ be elements of $X$. 
We have

\begin{align*}
|z+b|^{2}	&= |x+m+nu+i(y+nv)|^{2}\\
			&= (x+m+nu)^{2}+(y+nv)^{2} \geq (0+1+0)^{2} + (-1/2+1)^{2} = \frac{5}{4}. 
\end{align*}
Therefore, we deduce that $|z+b| \geq \sqrt{5/4} $. 
We also deduce that $|z^{\prime}+b| \geq \sqrt{5/4} $. 
Finally, we obtain that

\begin{align*}
|\phi_{b}(z)-\phi_{b}(z^{\prime})|	&= \left| \displaystyle \frac{1}{z+b} - \frac{1}{z^{\prime}+b} \right|\\
									&= \frac{|z-z^{\prime}|}{|z+b||z^{\prime}+b|} \leq \left( \sqrt{ \frac{4}{5} } \right)^{2} |z-z^{\prime}| = \frac{4}{5}|z-z^{\prime}|.
\end{align*}
Therefore, $S_{\tau}$ is uniformly contractive on $X$. 

4. Open Set Condition. \\
Note that $\inter(X) = \{ z \in \mathbb{C} |\ | z - 1/2| < 1/2 \}$. 
Let $\tau \in A_{0}$ and let $b \in I_{\tau}$. 
Since $f(\partial X) = \partial Y \cup \{ \infty \} $, we deduce that for all $b \in I_{\tau}$, 
\[ g_{b}(\inter(X)) \subset \{ z=x+iy \in \mathbb{C} |\ x > 1 \} = f(\inter(X)). \]
Moreover, if $b$ and $b^{\prime}$ are distinct elements, then $g_{b}(\inter(X))$ and $g_{b^{\prime}}(\inter(X))$ are disjoint. 
Therefore, we have that for all $b \in I_{\tau}$, 
\[\phi_{b}(\inter(X)) = f^{-1}\circ g_{b}(\inter(X)) \subset f^{-1}\circ f(\inter(X)) = \inter(X). \]
And if $b$ and $b^{\prime}$ are distinct elements, 
\[\phi_{b}(\inter(X)) \cap \phi_{b^{\prime}}(\inter(X))	= f^{-1}(g_{b}(\inter(X)) \cap g_{b^{\prime}}(\inter(X))) = \emptyset. \]
Therefore, $S_{\tau}$ satisfies the Open Set Condition of $S_{\tau}$. 

5. Bounded distortion Property. \\
Let $\epsilon$ be a positive real number which is less than $1/12$ and let $V^{\prime} := B(1/2, 1/2+\epsilon)$ be the open ball with center $1/2$ and radius $1/2+\epsilon$. We set $\tau := u +iv$. 
Then, for all $(m, n) \in \mathbb{N}^{2}$ and $z := x+iy \in V^{\prime}$, 
we have that 

\begin{align*}
|\phi_{m+n\tau}^{\prime}(z)|	&= \frac{1}{|z+m+n\tau|^{2}} = \frac{1}{(x+m+nu)^{2}+(y+nv)^{2}}\\
                                &\leq \frac{1}{(-\epsilon+1+0)^{2}+(-1/2-\epsilon+1)^{2}}\\
                                &= \frac{1}{2\epsilon^{2}-3\epsilon+5/4} = \frac{1}{2(\epsilon-3/4)^{2}+1/8}\\
                                &\leq \frac{1}{2(1/12-3/4)^{2}+1/8} = \frac{72}{73} < 1
\end{align*}
For each $z \in V^{\prime}$, we set 

\[z^{\prime} :=	\begin{cases}
					\displaystyle (|z-1/2|-\epsilon)\frac{(z-1/2)}{|z-1/2|} +1/2 & (z \notin X) \\
                    z & (z \in X).
                    \end{cases}
\]
Then, we have that $|z-z^{\prime}| \leq \epsilon$ and $|z^{\prime}-1/2| < 1/2$. 
It implies that $z^{\prime} \in X $. 
Thus, we obtain that $|\phi_{b}(z) - \phi_{b}(z^{\prime})| \leq (72/73) |z-z^{\prime}| < \epsilon$ and  
\[\left| \phi_{b}(z) - \frac{1}{2} \right|	\leq |\phi_{b}(z) - \phi_{b}(z^{\prime})| + \left| \phi_{b}(z^{\prime}) - \frac{1}{2} \right| < \frac{1}{2} + \epsilon. \]
It follows that for all $b \in I_{\tau}$, $\phi_{b}(V^{\prime}) \subset V^{\prime}$. 
In addition, $\phi_{b}$ is injective on $V^{\prime}$ and $\phi_{b}$ is holomorphic on $V^{\prime} := B(1/2, 1/2+\epsilon)$ since $\phi_{b}$ is holomorphic on $\mathbb{C} \setminus \{-b\}$. 

Let $b$ be an element of $I_{\tau}$ and $r_{0}:= 1/2 + \epsilon$.
Let $f_{b}$ be the function defined by 
\[f_{b}(z) := \frac{(\phi_{b}(r_{0}z+1/2) - \phi_{b}(1/2))}{r_{0}\phi_{b}^{\prime}(1/2)} \quad (z \in D:=\{z \in \mathbb{C} | |z| < 1\}). \]
Note that $f_{b}$ is holomorphic on $D$ and $f_{b}(0) = 0$ and $f_{b}^{\prime}(0) = 1$. 
By using the Koebe distortion theorem, we deduce that for all $z \in D$, 
\[\frac{1-|z|}{(1+|z|)^{3}} \leq |f_{b}(z)| \leq \frac{1+|z|}{(1-|z|)^{3}}. \]
Let $r_{1} := (r_{0} +1/2)/2$. we deduce that there exist $C_{1} \geq 1$ and $C_{2} \leq 1$ such that for all $z \in B(0, r_{1}/r_{0}) (\subset D)$, 
\[C_{2} \leq \frac{1-|z|}{(1+|z|)^{3}} \quad \text{and} \quad \frac{1+|z|}{(1-|z|)^{3}} \leq C_{1}. \]
Let $C := C_{1}/C_{2}$. Then, we have that for all $z, z^{\prime} \in B(0, r_{1}/r_{0})$, 

\begin{align*}
\frac{|\phi_{b}^{\prime}(r_{0}z+1/2)|}{|\phi_{b}^{\prime}(1/2)|}	&= |f_{b}^{\prime}(z)| \leq \frac{1+|z|}{(1-|z|)^{3}} \\
                                                                    &\leq C_{1} = C C_{2} \leq C \frac{1-|z^{\prime}|}{(1+|z^{\prime}|)^{3}}\\
                                                                    &\leq C |f_{b}^{\prime}(z^{\prime})| \leq C \frac{|\phi_{b}^{\prime}(r_{0}z^{\prime}+1/2)|}{|\phi_{b}^{\prime}(1/2)|}. 
\end{align*}
It follows that for all $z, z^{\prime} \in B(0, r_{1}/r_{0})$, $|\phi_{b}^{\prime}(r_{0}z+1/2)| \leq C |\phi_{b}^{\prime}(r_{0}z^{\prime}+1/2)|$. 
Finally, let $V := B(1/2, r_{1})$ be the open ball with center $1/2$ and radius $r_{1}$. 
Then, $V$ is an open and connected subset of $\mathbb{C}$ with $X \subset V$
and for all $z, z^{\prime} \in V$, 
\[|\phi_{b}^{\prime}(z)| \leq C |\phi_{b}^{\prime}(z^{\prime})|. \]
Therefore, $S_{\tau}$ satisfies the Bounded Distortion Property. 

3. Conformality. \\
Let $\tau \in A_{0}$ and let $b \in I_{\tau}$. 
Since $\phi_{b}$ is holomorphic on $\mathbb{C} \setminus \{-b\}$, $\phi_{b}$ is $\mathrm{C}^{2}$ and conformal on $V$. 
By the above argument, we have $\phi_{b}(V) \subset V$. 

6. Cone Condition. \\
Since $X$ is a closed disk, the Cone Condition is satisfied. 
\end{proof}

For the rest of the paper, let $V := B(1/2, r_{1})$, where $r_{1}$ is the number in the proof of Lemma \ref{GCCFisCIFS}.  
\begin{lemma}\label{basicestimate}
Let $\tau \in A_{0}$. Then, there exists $C \geq 1$ such that for all $z \in V$ and $b \in I_{\tau}$, we have $C^{-1}|b|^{-2} \leq |\phi^{\prime}_{b}(z)| \leq C|b|^{-2}$. 
\end{lemma}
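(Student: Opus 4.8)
The plan is to compute the derivative explicitly and compare $|z+b|$ with $|b|$ uniformly over $z \in V$ and $b \in I_{\tau}$, then feed the resulting pointwise comparison into the Bounded Distortion Property established in Proposition \ref{GCCFisCIFS}. First I would recall that $\phi_{b}(z) = 1/(z+b)$, so $\phi_{b}'(z) = -1/(z+b)^{2}$ and hence $|\phi_{b}'(z)| = |z+b|^{-2}$. Thus the lemma reduces entirely to showing that there is a constant $a \geq 1$ (depending only on $\tau$) such that
\[
a^{-1}|b| \leq |z+b| \leq a\,|b| \quad (z \in V,\ b \in I_{\tau}),
\]
since squaring and inverting this chain yields the claimed two-sided bound on $|\phi_{b}'(z)|$ with $C = a^{2}$.

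The key observation is that $V = B(1/2, r_{1})$ is a fixed bounded set, so $|z|$ is bounded above by some $R := 1/2 + r_{1}$ for all $z \in V$, while $|b|$ is bounded below away from $0$ on $I_{\tau}$. Indeed, for $b = m + n\tau$ with $m,n \in \mathbb{N}$ and $\tau = u+iv$ with $v \geq 1$, one has $|b| \geq |\Im b| = nv \geq 1$, and also $|b| \geq m \geq 1$; in particular $\inf_{b \in I_{\tau}}|b| \geq 1 > 0$. The upper estimate then follows from the triangle inequality, $|z+b| \leq |z| + |b| \leq R + |b| \leq (R+1)|b|$, using $|b| \geq 1$. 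For the lower estimate I would use the reverse triangle inequality together with the same lower bound on $|b|$: either directly observe $|z+b| \geq |b| - |z| \geq |b| - R$, or more cleanly note that since $|z| \leq R$ is bounded while $|b| \to \infty$ along $I_{\tau}$, the ratio $|z+b|/|b|$ is bounded below; concretely, $|z+b| \geq |b| - R \geq |b|(1 - R/|b|)$, which for large $|b|$ is at least $\tfrac{1}{2}|b|$, and for the finitely many (or uniformly handled) small $|b|$ one uses that $|z+b|$ is bounded away from $0$ because $-b \notin V$ for every $b \in I_{\tau}$. This last point is where a little care is needed.

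The main obstacle is precisely ensuring the lower bound $|z+b|$ stays bounded below uniformly, i.e.\ that $z = -b$ never occurs and indeed $-b$ stays uniformly outside $V$. This is where I would invoke the geometry already used in Proposition \ref{GCCFisCIFS}: every $-b$ with $b \in I_{\tau}$ has $\Re(-b) = -(m+nu) \leq -1$, whereas $V = B(1/2, r_{1})$ with $r_{1} = (r_{0}+1/2)/2 = (1/2+\epsilon+1/2)/2 = (1+\epsilon)/2 < 3/4$ is contained in the half-plane $\Re z > 1/2 - r_{1} > -1/4$; hence $\mathrm{dist}(-b, V) \geq (1/2 - r_{1}) - (-1) = 3/2 - r_{1} > 0$ uniformly in $b$. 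This gives a uniform lower bound $|z+b| \geq 3/2 - r_{1}$, which combined with the asymptotic bound $|z+b| \geq \tfrac12|b|$ for large $|b|$ produces a single constant $a$ valid for all $b \in I_{\tau}$. Once this uniform comparison $a^{-1}|b| \leq |z+b| \leq a|b|$ is in hand, the lemma follows immediately by setting $C = a^{2}$; no appeal to the Koebe-type BDP constant is even required, though one could alternatively derive the result by combining the single-point estimate at $z = 1/2$ with the BDP constant from Proposition \ref{GCCFisCIFS}.
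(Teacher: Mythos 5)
Your proof is correct, but it takes a genuinely different route from the paper's. The paper's argument is a two-line reduction: since $\phi_{b}^{\prime}(z) = -1/(z+b)^{2}$, one has the exact identity $|\phi_{b}^{\prime}(0)| = |b|^{-2}$ (note $0 \in V$ because $r_{1} > 1/2$), and then the Bounded Distortion Property already established in the proof of Proposition \ref{GCCFisCIFS} --- for all $z, z^{\prime} \in V$ and $b \in I_{\tau}$, $|\phi_{b}^{\prime}(z)| \leq C|\phi_{b}^{\prime}(z^{\prime})|$ --- is applied with $z^{\prime} = 0$ to give $C^{-1}|b|^{-2} \leq |\phi_{b}^{\prime}(z)| \leq C|b|^{-2}$ at once. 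You instead bypass the BDP entirely and prove the two-sided comparison $a^{-1}|b| \leq |z+b| \leq a|b|$ by hand: the upper bound from the triangle inequality together with $|b| \geq 1$ (valid since $|\Im b| = nv \geq 1$), and the lower bound by splitting into $|b| \geq 2R$ (reverse triangle inequality gives $|z+b| \geq |b|/2$) and $|b| < 2R$ (where you use that $\Re(-b) = -(m+nu) \leq -1$ while $V \subset \{\Re z > 1/2 - r_{1}\}$, so $|z+b| \geq 3/2 - r_{1} > 0$, which for bounded $|b|$ can be rescaled to a bound of the form $a^{-1}|b|$). These estimates all check out, since $r_{1} = (1+\epsilon)/2 < 3/4$ makes every constant positive and uniform in $b$. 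What each approach buys: the paper's proof is shorter because the Koebe-based distortion constant was already paid for in Proposition \ref{GCCFisCIFS}, whereas yours is elementary and self-contained, produces explicit constants, and in effect re-derives the single-letter case of the BDP directly for these particular M\"{o}bius maps. One small remark on your closing alternative: if you do want to argue via the BDP plus a single-point evaluation, the natural base point is $z^{\prime} = 0$, where $|\phi_{b}^{\prime}(0)| = |b|^{-2}$ holds exactly, rather than $z^{\prime} = 1/2$, which would require the additional comparison of $|1/2+b|$ with $|b|$.
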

\begin{proof}
Note that $|\phi_{b}^{\prime}(0)| = |b|^{-2}$. 
By using the BDP, there exists $C \geq 1$ such that for all $z \in B(1/2, r_{1})$, we have $C^{-1}|\phi_{b}^{\prime}(0)| \leq |\phi_{b}^{\prime}(z)| \leq C |\phi_{b}^{\prime}(0)|$. 
We deduce that $C^{-1}|b|^{-2} \leq |\phi_{b}^{\prime}(z)| \leq C |b|^{-2}$. 
\end{proof}
\begin{lemma}\label{EL}
For all $\tau \in A_{0}$, $S_{\tau}$ is a hereditarily regular CIFS with $\theta_{\tau} = 1$. 
\end{lemma}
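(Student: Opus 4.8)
The plan is to reduce the statement to an elementary two-variable series and read off both conclusions from it. Since $\theta_\tau = \inf\{t \ge 0 : \psi_\tau^1(t) < \infty\}$ and, by Theorem \ref{hregularequi} (recall $I_\tau$ is infinite), $S_\tau$ is hereditarily regular if and only if $\psi_\tau^1(\theta_\tau) = \infty$, it suffices to determine exactly for which $t$ the series $\psi_\tau^1(t)$ converges and to verify that it diverges at the threshold. First I would invoke Lemma \ref{basicestimate}: because $C^{-1}|b|^{-2} \le \sup_{z \in X}|\phi_b'(z)| \le C|b|^{-2}$ for every $b \in I_\tau$, raising to the power $t$ and summing yields
\[ C^{-t} \sum_{b \in I_\tau} |b|^{-2t} \;\le\; \psi_\tau^1(t) \;\le\; C^{t} \sum_{b \in I_\tau} |b|^{-2t} \qquad (t \ge 0), \]
so $\psi_\tau^1(t)$ is finite exactly when the lattice sum $\Sigma_\tau(t) := \sum_{b \in I_\tau}|b|^{-2t}$ is, and the two are infinite together.

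Next I would pass from $\Sigma_\tau(t)$ to a $\tau$-independent series. Writing $\tau = u + iv$ with $u \ge 0$, $v \ge 1$ and $b = m + n\tau$, we have $|b|^2 = (m+nu)^2 + (nv)^2$. Since $u \ge 0$ and $v \ge 1$, the lower bound $|b|^2 \ge m^2 + n^2$ is immediate, and expanding together with $(a+c)^2 \le 2a^2 + 2c^2$ gives $|b|^2 \le D_\tau(m^2+n^2)$ with $D_\tau := 2 + 2u^2 + v^2$. Hence
\[ D_\tau^{-t} \sum_{m,n \ge 1} (m^2+n^2)^{-t} \;\le\; \Sigma_\tau(t) \;\le\; \sum_{m,n \ge 1}(m^2+n^2)^{-t}, \]
which reduces the whole question to the convergence of $\sum_{m,n\ge1}(m^2+n^2)^{-t}$.

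Finally I would settle this last series by partitioning $\mathbb{N}^2$ into the level sets $\{(m,n) : \max(m,n) = N\}$, each of which has $2N-1$ points on which $N^2 \le m^2+n^2 \le 2N^2$. This makes the series comparable to $\sum_{N \ge 1} N \cdot N^{-2t} = \sum_{N \ge 1} N^{1-2t}$, which converges precisely when $t > 1$ and diverges when $t \le 1$, in particular at $t = 1$. Feeding this back through the two displays gives $\{t : \psi_\tau^1(t) < \infty\} = (1,\infty)$, hence $\theta_\tau = 1$, together with $\psi_\tau^1(1) = \infty$; Theorem \ref{hregularequi} then delivers hereditary regularity.

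The routine part is the one-variable convergence test, and the one point requiring care is the two-sided comparison of $|b|^2 = (m+nu)^2 + (nv)^2$ with $m^2 + n^2$: the distortion constant $C$ and the geometric constant $D_\tau$ both depend on $\tau$ but are independent of $t$, so they cannot move the critical exponent away from $1$. This is exactly why $\theta_\tau = 1$ holds uniformly over all $\tau \in A_0$, and it is the only place where the hypotheses $u \ge 0$, $v \ge 1$ defining $A_0$ enter the argument.
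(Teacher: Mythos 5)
Your proof is correct, and it is a genuinely cleaner route to this particular lemma than the one the paper takes. The paper also starts from Lemma \ref{basicestimate}, but then works with the dyadic partition $K(p) = \{ b = m+n\tau :\ \max(m,n) \in [2^{p-1},2^{p}) \}$ and proves lower bounds on $|b|^{2}$ that retain the parameter, namely $|b|^{2} \geq 4^{p-1}\min\{1+|\tau|^{2}/4^{p-1},\, |\tau|^{2}\}$, arriving at the $\tau$-dependent series bound (\ref{upperestimate}). You instead decouple the parameter immediately via the two-sided comparison $m^{2}+n^{2} \leq |b|^{2} \leq D_{\tau}(m^{2}+n^{2})$ and reduce everything to the single universal series $\sum_{m,n}(m^{2}+n^{2})^{-t}$, settled by linear shells $\{\max(m,n)=N\}$; all your estimates check out (including the divergence at $t=1$ on the lower side, which is what Theorem \ref{hregularequi} needs). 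The trade-off is worth noting: the paper's more laborious estimate is not an inefficiency but an investment. Because the factor $\min\{1+|\tau_{n}|^{2}/4^{p-1},\, |\tau_{n}|^{2}\}^{-t}$ tends to $0$ termwise as $\tau_{n}\to\infty$ while being dominated by the summable function in (\ref{dominated}), the paper can apply Lebesgue's dominated convergence theorem in Lemma \ref{INFTYLEM} to get $\psi_{\tau_{n}}^{1}(1+\epsilon)\to 0$, hence $h_{\tau}\to 1$ at infinity. Your bounds cannot be recycled for that purpose: your upper bound $\Sigma_{\tau}(t) \leq \sum_{m,n}(m^{2}+n^{2})^{-t}$ is independent of $\tau$ and so carries no decay as $\tau\to\infty$ (the $\tau$-dependence sits only in the constant $D_{\tau}$ on the wrong side of the sandwich). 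So your argument proves Lemma \ref{EL} more elementarily, but anyone following your route would need a separate, $\tau$-sensitive estimate to recover Lemma \ref{INFTYLEM}.
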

\begin{proof}
Let $\tau \in A_{0}$. 
For each non-negative integer $p$, we set $K^{\prime}(p):= \{ b = m + n \tau \in I_{\tau} |\ (m, n) \in \mathbb{N}^{2}, m < 2^{p}, n < 2^{p} \}$ and $K(p) := K^{\prime}(p) \setminus K^{\prime}(p-1)$. 
Note that for each non-negative integer $p$, $|K^{\prime}(p)|= (2^{p}-1)^{2}$. 
We deduce that for each $p \in \mathbb{N}$, $|K(p)|=|K^{\prime}(p)|-|K^{\prime}(p-1)|=(2^{p}-1)^{2}-(2^{p-1}-1)^{2}= 3 \cdot 4^{p-1}-2 \cdot 2^{p-1} =2^{p-1}(3\cdot2^{p-1}-2)$ and $ 4^{p-1}\leq |K(p)| \leq 3\cdot 4^{p-1}$. 

Let $ b = m + n \tau = m + n (u + i v) \in K(p)$. We consider the following two cases. 
\begin{itemize}
\item[(i)] If $m \geq 2^{p-1}$ then we have 

\begin{align*}
|b|^{2} &= |m + nu + inv|^{2}\\
		&= (m + nu)^{2} + (nv)^{2}\\
        &\geq (2^{p-1} + u)^{2} + v^{2}\\
        &\geq (2^{p-1})^{2} + |\tau|^{2} = 4^{p-1}\left( 1 + \frac{|\tau|^{2}}{4^{p-1}} \right). 
\end{align*}
\item[(ii)] If $n \geq 2^{p-1}$ then we have

\begin{align*}
|b|^{2} &= |m + nu + inv|^{2}\\
		&= (m + nu)^{2} + (nv)^{2}\\
        &\geq n^{2}(u^{2} + v^{2}) \geq 4^{p-1}|\tau|^{2}. 
\end{align*}
\end{itemize}
Then for any $t \geq 0$, we have 

\begin{align*}
\sum_{b \in I_{\tau}} |b|^{-2t} &= \sum_{p \in \mathbb{N}} \sum_{b \in K(p)} \left\{ |b|^{2} \right\}^{-t} \\
						&\leq \sum_{p \in \mathbb{N}} |K(p)|  4^{-t(p-1)} \left\{ \min \{ 1 + \frac{|\tau|^{2}}{4^{p-1}}, |\tau|^{2} \} \right\}^{-t} \\
                        &\leq \sum_{p \in \mathbb{N}}  3 \cdot 4^{ (p-1)(1-t)} \left\{ \min \{ 1 + \frac{|\tau|^{2}}{4^{p-1}}, |\tau|^{2} \} \right\}^{-t}. 
\end{align*}
Hence, we deduce that 
\begin{equation}\label{upperestimate}
\sum_{b \in I_{\tau}} |b|^{-2t} \leq 3 \sum_{p \in \mathbb{N}}  4^{(p-1)(1-t)} \left\{ \min \{ 1 + \frac{|\tau|^{2}}{4^{p-1}}, |\tau|^{2} \} \right\}^{-t}. 
\end{equation}
Moreover, by the inequality $|\tau|^{2} \geq 1$ and the inequality $\displaystyle 1 + \frac{|\tau|^{2}}{4^{p-1}} \geq 1$, we deduce that for all $p \in \mathbb{N}$,  
\begin{equation}\label{dominated}
3 \cdot 4^{(p-1)(1-t)} \left\{ \min \{ 1 + \frac{|\tau|^{2}}{4^{p-1}}, |\tau|^{2} \} \right\}^{-t} \leq 3 \cdot 4^{(p-1)(1-t)}. 
\end{equation}
Also, by the inequality $|b| \leq |m| + |n||\tau| \leq 2^{p}(1 + |\tau|)  \ (p \in \mathbb{N}, b \in K(p))$, we have

\begin{align*}
\sum_{b \in I_{\tau}} |b|^{-2t} &= \sum_{p \in \mathbb{N}} \sum_{b \in K(p)} \left\{ |b|^{-2} \right\}^{t} \\
						&\geq \sum_{p \in \mathbb{N}} |K(p)| 4^{-pt}(1 + |\tau|)^{-2t}. 
\end{align*}
Thus, we deduce that 
\begin{equation}\label{lowerestimate}
\sum_{b \in I_{\tau}} |b|^{-2t} \geq 4^{-1} \sum_{p \in \mathbb{N}} 4^{p(1-t)}(1 + |\tau|)^{-2t}. 
\end{equation}
Finally, from Lemma \ref{basicestimate}, the inequality (\ref{upperestimate}) and the inequality (\ref{lowerestimate}), it follows that $\psi_{\tau}^{1}(1) = \infty$ and if $t >1$, then $\psi_{\tau}^{1}(t) < \infty$. 
Therefore, we deduce that $\theta_{\tau} = 1$ and by Theorem \ref{hregularequi}, we obtain that for all $\tau \in A_{0}$, $S_{\tau}$ is hereditarily regular. Hence, we have proved our lemma.  
\end{proof}
\begin{lemma}\label{INFTYLEM}
We have $\lim_{\tau \to \infty, \tau \in A_{0}}h_{\tau} = 1$, i.e., for each $\epsilon > 0$, there exists $N > 0$ such that, for all $\tau \in A_{0}$ with $|\tau| \geq N$, we have $|h_{\tau}-1| < \epsilon$. 
\end{lemma}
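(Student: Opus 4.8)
The plan is to squeeze $h_{\tau}$ between the lower bound $h_{\tau}>1$, which is already available, and an upper bound of the form $h_{\tau}\le 1+\epsilon$ valid for all sufficiently large $|\tau|$. For the lower bound, Lemma \ref{EL} gives $\theta_{\tau}=1$ together with hereditary regularity, so Theorem \ref{hregularequi} yields $\theta_{\tau}<h_{\tau}$, i.e. $h_{\tau}>1$ for every $\tau\in A_{0}$. Hence it suffices to prove $\limsup_{|\tau|\to\infty}h_{\tau}\le 1$, and for this I would fix an arbitrary $t>1$ and show that $h_{\tau}\le t$ once $|\tau|$ is large enough.

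To obtain $h_{\tau}\le t$, I would use the characterization $h_{\tau}=\inf\{s\ge 0 : P_{\tau}(s)<0\}$ from Theorem \ref{presshaus} together with the inequality $P_{\tau}(t)\le \log\psi_{\tau}^{1}(t)$, which follows from the subadditivity of $\log\psi_{\tau}^{n}$ in Proposition \ref{logsubadditive} (so that $P_{\tau}(t)=\inf_{n}\tfrac1n\log\psi_{\tau}^{n}(t)\le \log\psi_{\tau}^{1}(t)$). Thus it is enough to show that for each fixed $t>1$,
\[ \psi_{\tau}^{1}(t)\longrightarrow 0 \qquad (\tau\in A_{0},\ |\tau|\to\infty); \]
indeed, once $\psi_{\tau}^{1}(t)<1$ we get $P_{\tau}(t)<0$ and therefore $h_{\tau}\le t$. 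Since the Bounded Distortion Property constant produced in the proof of Proposition \ref{GCCFisCIFS} comes from the Koebe estimates on the fixed ball $B(0,r_{1}/r_{0})$ and is therefore independent of $\tau$ and $b$, Lemma \ref{basicestimate} holds with a single constant $C\ge 1$ valid for all $\tau\in A_{0}$, giving
\[ \psi_{\tau}^{1}(t)=\sum_{b\in I_{\tau}}\Bigl(\sup_{z\in X}|\phi_{b}'(z)|\Bigr)^{t}\le C^{t}\sum_{b\in I_{\tau}}|b|^{-2t}. \]

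It then remains to estimate $\sum_{b\in I_{\tau}}|b|^{-2t}$, and here I would reuse the work already done in Lemma \ref{EL}: by inequality (\ref{upperestimate}),
\[ \sum_{b\in I_{\tau}}|b|^{-2t}\le 3\sum_{p\in\mathbb{N}}4^{(p-1)(1-t)}\Bigl\{\min\Bigl(1+\tfrac{|\tau|^{2}}{4^{p-1}},\,|\tau|^{2}\Bigr)\Bigr\}^{-t}. \]
For each fixed $p$ the $p$-th summand tends to $0$ as $|\tau|\to\infty$, because both arguments of the minimum diverge and $t>0$; moreover inequality (\ref{dominated}) bounds the $p$-th summand by $3\cdot 4^{(p-1)(1-t)}$, which is independent of $\tau$ and summable over $p$ since $4^{1-t}<1$. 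Applying Lebesgue's dominated convergence theorem with respect to the counting measure on $\mathbb{N}$ (along any sequence $\tau_{k}$ with $|\tau_{k}|\to\infty$) lets me interchange the limit and the sum, so $\sum_{b\in I_{\tau}}|b|^{-2t}\to 0$ and hence $\psi_{\tau}^{1}(t)\to 0$. This gives $1<h_{\tau}\le t=1+\epsilon$ for all large $|\tau|$, and hence $h_{\tau}\to 1$.

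The main obstacle is precisely the interchange of the limit $|\tau|\to\infty$ with the infinite sum over $p$; everything else is a direct consequence of estimates already recorded. That interchange is exactly what the uniform, $\tau$-independent majorant $3\cdot 4^{(p-1)(1-t)}$ of (\ref{dominated}) is designed to justify, which is why reusing that bound, rather than the cruder per-$\tau$ estimates, is the crucial point.
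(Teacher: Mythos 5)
Your proposal is correct and follows essentially the same route as the paper: both squeeze $h_{\tau}$ using $h_{\tau}>\theta_{\tau}=1$ (Lemma \ref{EL} with Theorem \ref{hregularequi}) below and, above, show $\psi_{\tau}^{1}(t)\to 0$ for fixed $t>1$ by combining Lemma \ref{basicestimate}, inequality (\ref{upperestimate}), the majorant (\ref{dominated}), and Lebesgue's dominated convergence theorem, then pass to the pressure via Proposition \ref{logsubadditive}. Your explicit remark that the constant in Lemma \ref{basicestimate} is uniform in $\tau$ (coming from Koebe estimates on a fixed ball) is a point the paper uses implicitly without comment, and it is indeed needed for the final step, so including it is a small improvement rather than a deviation.
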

\begin{proof}
Let $\epsilon > 0$ and $t := 1 + \epsilon > 1$. 
Let $ \{ \tau_{n} \}_{n \in \mathbb{N}}$  be any sequence in $A_{0}$ such that $|\tau_{n}| \to \infty$ as $n \to \infty$. 
Note that for all $p \in \mathbb{N}$, we have $\displaystyle \left\{ \min \{ 1 + \frac{|\tau_{n}|^{2}}{4^{p-1}}, |\tau_{n}|^{2} \} \right\}^{-t} \to 0$ as $n \to \infty$. By the inequality (\ref{upperestimate}) and the inequality (\ref{dominated}), we deduce that the function
\[f_{n}(p) := 3 \cdot 4^{(p-1)(1-t)} \left\{ \min \{ 1 + \frac{|\tau_{n}|^{2}}{4^{p-1}}, |\tau_{n}|^{2} \} \right\}^{-t} \quad (p \in \mathbb{N})\]
is dominated by the integrable function $g(p) := 3 \cdot 4^{(p-1)(1-t)} \quad (p \in \mathbb{N})$ with respect to the counting measure on $\mathbb{N}$. 
Then, by Lebesgue's dominated convergence theorem, we deduce that $\displaystyle \lim_{n \to \infty } \sum_{b \in I_{\tau_{n}}}|b|^{-2t} = 0 $. 
By Lemma \ref{basicestimate}, we obtain $\displaystyle \lim_{n \to \infty} \psi_{\tau_{n}}^{1}(t) = 0$. 
It follows that for any $\epsilon > 0$, there exists $N \in \mathbb{N}$ such that for all $\tau \in A_{0}$ with $|\tau| \geq N$, we have $\psi_{\tau}^{1}(1 + \epsilon) = \psi_{\tau}^{1}(t) < 1$. 

By Proposition \ref{logsubadditive}, we obtain that $\psi^{n}_{\tau}(1 + \epsilon) \leq (\psi_{\tau}^{1}(1 + \epsilon))^{n} < 1$. 
Therefore, we deduce that $P_{\tau}(1 + \epsilon) \leq 0 $. 
Thus, for all $\epsilon > 0 $, there exists $N \in \mathbb{N}$ such that for all $\tau \in A_{0}$ with $|\tau| \geq N$, $h_{\tau} \leq 1 + \epsilon$. 

Moreover, by Theorem \ref{presshaus} and Lemma \ref{EL}, for all $\tau \in A_{0}$, we have $1 - \epsilon < h_{\tau} $. 
Hence, we have proved our lemma. 
\end{proof}
\begin{theorem}\label{betweenoneandtwo}
Let $\tau \in A_{0}$. Then we have $1 < h_{\tau} < 2$. 
\end{theorem}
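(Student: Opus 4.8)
The plan is to split $1 < h_\tau < 2$ into its two inequalities, deriving the lower bound from the regularity theory already in hand and the upper bound from the geometric criterion of Theorem \ref{HAUSUPPESTIMATE}.

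For the lower bound I would simply combine Lemma \ref{EL} with Theorem \ref{hregularequi}. Lemma \ref{EL} gives that $S_\tau$ is hereditarily regular with $\theta_\tau = 1$, and the last assertion of Theorem \ref{hregularequi} states that a hereditarily regular CIFS satisfies $\theta_S < h_S$. Applying this with $\theta_\tau = 1$ yields $1 < h_\tau$ at once, with no further computation.

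For the upper bound I would apply Theorem \ref{HAUSUPPESTIMATE} with $d = 2$, since $X \subset \mathbb{C} \cong \mathbb{R}^2$. As $S_\tau$ is hereditarily regular it is in particular regular, so the regularity hypothesis holds, and it remains to check that $\lambda_2(\inter(X) \setminus X_1) > 0$, where $X_1 := \bigcup_{b \in I_\tau} \phi_b(X)$. Here I would reuse the factorization $\phi_b = f^{-1} \circ g_b$ from the proof of Proposition \ref{GCCFisCIFS}, with $f(z) = 1/z$ (an involution) and $g_b(z) = z + b$, together with the fact established there that $f(\inter(X)) = \{z : \Re z > 1\}$. Since $\phi_b(X) = f(g_b(X))$ and $f$ is a bijection of $\hat{\mathbb{C}}$, a point $w \in \inter(X)$ lies in $X_1$ if and only if $f(w) = 1/w$ lies in one of the closed disks $g_b(X) = \{z : |z - (b + 1/2)| \leq 1/2\}$. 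The crucial observation is that for $b = m + n\tau$ with $m, n \in \mathbb{N}$, the center $b + 1/2$ has imaginary part $nv \geq v \geq 1$, so each such disk is contained in $\{z : \Im z \geq v - 1/2 \geq 1/2\}$. Hence the bounded open rectangle $R := \{z : 1 < \Re z < 2,\ 0 < \Im z < 1/2\}$ is disjoint from every $g_b(X)$ and satisfies $R \subset f(\inter(X))$.

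I would then conclude that $f(R) = \{1/z : z \in R\}$ is contained in $\inter(X) \setminus X_1$: indeed $f(R) \subset \inter(X)$ because $R \subset f(\inter(X))$, and $f(R) \cap X_1 = \emptyset$ because $R$ meets no $g_b(X)$. Since every $z \in \overline{R}$ has $|z| \geq 1$, the map $f$ is a conformal diffeomorphism on a neighborhood of $\overline{R}$ avoiding $0$ and $\infty$, so $f(R)$ is a nonempty open set with $\lambda_2(f(R)) = \int_R |z|^{-4}\, d\lambda_2(z) > 0$. Therefore $\lambda_2(\inter(X) \setminus X_1) > 0$, and Theorem \ref{HAUSUPPESTIMATE} gives $h_\tau < 2$. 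The main obstacle is locating the uncovered region: one must recognize that the constraint $\Im b = nv \geq v \geq 1$ pushes every image disk a fixed distance above the real axis, freeing the thin strip adjacent to $\{\Im z = 0,\ \Re z > 1\}$, and then transport a bounded piece of that strip back into $\inter(X)$ by $f$ while keeping the measure positive. Everything else is routine.
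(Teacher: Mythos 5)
Your proof is correct and takes essentially the same route as the paper: the lower bound by combining Lemma \ref{EL} with Theorem \ref{hregularequi}, and the upper bound by applying Theorem \ref{HAUSUPPESTIMATE} after exhibiting an open subset of $\inter(X) \setminus X_{1}$ via the factorization $\phi_{b} = f^{-1} \circ g_{b}$. The only difference is the choice of uncovered region --- the paper takes a ball in $\{ z : \Re z \geq 1, \ \Im z < 0 \}$ below the real axis, while you take a rectangle in the strip $\{ z : 1 < \Re z < 2, \ 0 < \Im z < 1/2 \}$ just above it; both work because every $g_{b}(X)$ lies in $\{ z : \Im z \geq 1/2 \}$.
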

\begin{proof}
Let $\tau \in A_{0}$. 
By Theorem \ref{hregularequi}, we have $ 1 = \theta_{\tau} < h_{\tau} $. 
We now show that $h_{\tau} < 2$. 
We use the notations in the proof of Proposition \ref{GCCFisCIFS}. 
Note that 
\[\bigcup_{b \in I_{\tau}}g_{b}(X) \subset \{ z \in \mathbb{C} |\ \Re z \geq 1 \ \text{and} \ \Im z \geq 0 \}. \]
Let $U_{0}$ be an open ball such that $U_{0} \subset \{ z \in \mathbb{C} |\ \Re z \geq 1 \ \text{and} \ \Im z < 0 \}$. 
Since $U_{0} \subset Y$, we deduce that $f^{-1}(U_{0}) \subset f^{-1}(Y) = \inter(X)$.
We set $X_{1} := \cup_{b \in I_{\tau}} \phi_{b}(X)$. 
Since $U_{0} \cap \bigcup_{b \in I_{\tau}}g_{b}(X) = \emptyset$, we deduce that $f^{-1}(U_{0}) \cap X_{1} = f^{-1}(U_{0} \cap \bigcup_{b \in I_{\tau}}g_{b}(X))  = \emptyset$. 
It follows that $\inter(X) \setminus X_{1} \supset f^{-1}(U_{0})$. 

Therefore, we deduce that $\lambda_{2}(\inter(X) \setminus X_{1}) > 0$ where, $\lambda_{2}$ is the 2-dimensional Lebesgue measure. 
By Theorem \ref{HAUSUPPESTIMATE}, we obtain that $h_{\tau} < 2$. 
Hence, we have proved $1 < h_{\tau} < 2$. 
\end{proof}

\section{Proof of the main results}
\subsection{Proof of Main result A}
We first show the following lemma. 
\begin{lemma}\label{LOGEST}
Let $\tau \in A_{0}$ and suppose that a sequence $\{\tau_{n}\}_{n \in \mathbb{N}}$ in $A_{0}$ satisfies $\lim_{n \to \infty}\tau_{n}= \tau$. 
Then, there exist $K \in \mathbb{N}$, $C_{1} > 0$ and $C_{2} > 0$ such that for all $k \geq K$, $(m, n) \in \mathbb{N}^{2}$ and $z, z^{\prime} \in X$, 
\begin{equation}\label{elementestimate1}
C_{1} \leq \frac{|z^{\prime}+m+n\tau_{k}|^{2}}{|z+m+n\tau|^{2}} \leq C_{2}. 
\end{equation}
\end{lemma}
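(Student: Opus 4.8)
The statement is a two-sided uniform bound on the ratio of $|z'+m+n\tau_k|^2$ (moving point $z'$, moving parameter $\tau_k$) to $|z+m+n\tau|^2$ (moving point $z$, fixed limit parameter $\tau$), valid for all $(m,n)\in\mathbb{N}^2$, all $z,z'\in X$, and all large $k$. The key structural observation is that the quantities $|z+m+n\tau|$ behave, up to bounded multiplicative factors, like $|m+n\tau|$, because $z$ ranges over the bounded set $X$ while $m+n\tau$ can be arbitrarily large. So the plan is to reduce the ratio involving $z,z'$ to a ratio involving only the lattice points $m+n\tau_k$ and $m+n\tau$, and then control that lattice ratio using $\tau_k\to\tau$.

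\begin{proof}[Proof sketch]
The plan is to first establish that the point $z\in X$ is negligible compared to $m+n\tau$. Since $X=\{z:|z-1/2|\le 1/2\}$ has diameter $1$, we have $|z|\le 1$ for all $z\in X$, while from the contractivity estimate in Proposition~\ref{GCCFisCIFS} one has $|m+n\tau|\ge\sqrt{5/4}$ (indeed $|z+m+n\tau|\ge\sqrt{5/4}$ for all $z\in X$, so $|m+n\tau|\ge\sqrt{5/4}-1>0$, and in fact $|m+n\tau|\to\infty$ as $m+n\to\infty$). The triangle inequality then gives, for every $z\in X$,
\[
\bigl|\,|z+m+n\tau| - |m+n\tau|\,\bigr|\le |z|\le 1,
\]
so that $|z+m+n\tau|$ and $|m+n\tau|$ differ additively by at most $1$; combined with the lower bound $|m+n\tau|\ge\sqrt{5/4}$ this yields two-sided multiplicative control, i.e.\ constants $a_1,a_2>0$ with $a_1|m+n\tau|\le|z+m+n\tau|\le a_2|m+n\tau|$ uniformly in $(m,n)$ and $z$. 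The same reasoning applies to the numerator with $\tau_k$ in place of $\tau$, with constants uniform in $k$ once $k$ is large (since $\tau_k\to\tau$ keeps $|\tau_k|$ bounded and bounded away from the degenerate regime).

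The second step is to compare the lattice quantities $|m+n\tau_k|^2$ and $|m+n\tau|^2$ directly. Writing $\tau_k=\tau+\delta_k$ with $\delta_k\to 0$, one has $m+n\tau_k=(m+n\tau)+n\delta_k$, so
\[
\bigl|\,|m+n\tau_k|-|m+n\tau|\,\bigr|\le n|\delta_k|.
\]
To turn this into a bounded ratio I would divide by $|m+n\tau|$ and bound $n/|m+n\tau|$ uniformly: since $\tau=u+iv$ with $v\ge 1$, the imaginary part gives $|m+n\tau|\ge n\,\Im\tau = nv\ge n$, hence $n/|m+n\tau|\le 1$. Therefore
\[
\frac{|m+n\tau_k|}{|m+n\tau|}\le 1+\frac{n|\delta_k|}{|m+n\tau|}\le 1+|\delta_k|,
\]
and similarly a lower bound $1-|\delta_k|$, both uniform in $(m,n)$. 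Choosing $K$ so large that $|\delta_k|\le 1/2$ for $k\ge K$ makes these ratios lie in a fixed interval bounded away from $0$ and $\infty$.

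Combining the two steps gives the claim: the full ratio factors as
\[
\frac{|z'+m+n\tau_k|^2}{|z+m+n\tau|^2}
=\left(\frac{|z'+m+n\tau_k|}{|m+n\tau_k|}\right)^{\!2}
\left(\frac{|m+n\tau_k|}{|m+n\tau|}\right)^{\!2}
\left(\frac{|m+n\tau|}{|z+m+n\tau|}\right)^{\!2},
\]
and each of the three factors has been bounded above and below by positive constants independent of $(m,n)$ and of $z,z'\in X$, uniformly for $k\ge K$. Taking $C_1$ to be the product of the three lower bounds and $C_2$ the product of the three upper bounds finishes the proof. The main obstacle is purely in making the estimates uniform in the lattice indices $(m,n)$ simultaneously; the device that overcomes it is the inequality $n\le|m+n\tau|$ coming from $\Im\tau\ge 1$, which kills the apparent growth of the perturbation $n\delta_k$ relative to $m+n\tau$.
\end{proof}
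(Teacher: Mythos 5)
Your proof is correct, and it takes a genuinely different route from the paper's. The paper argues by one direct computation in coordinates: writing $\tau=u+iv$ and $\tau_{k}=u_{k}+iv_{k}$, it fixes $K$ so that $|u-u_{k}|\le 1$ and $|v-v_{k}|\le v/3$ for $k\ge K$, worst-cases $z,z'\in X$ coordinatewise (real parts in $[0,1]$, imaginary parts in $[-1/2,1/2]$), and then bounds the resulting quotients of quadratic expressions, such as
\[
\frac{(1+m+n(1+u))^{2}+(1/2+n(4/3)v)^{2}}{(m+nu)^{2}+(-1/2+nv)^{2}},
\]
by splitting them and taking maxima over the cases in which $m$ or $n$ dominates (in effect dividing through by $m^{2}$ or $n^{2}$). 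Your argument instead factors the ratio through the lattice moduli: $|z+m+n\tau|$ is uniformly comparable to $|m+n\tau|$ because $|z|\le 1$ while $|m+n\tau|^{2}=(m+nu)^{2}+(nv)^{2}\ge 2$ for every $\tau\in A_{0}$, and $|m+n\tau_{k}|$ is comparable to $|m+n\tau|$ for large $k$ because the perturbation satisfies $n|\tau_{k}-\tau|\le |\tau_{k}-\tau|\,|m+n\tau|$, thanks to $|m+n\tau|\ge\Im(m+n\tau)=nv\ge n$. This inequality $n\le|m+n\tau|$ is exactly the uniformity mechanism hidden inside the paper's coordinate estimates (there it appears as the terms $(-1/2+nv)^{2}$ in the denominators), but your factorization isolates it, yields clean explicit constants such as $1\pm|\delta_{k}|$, and treats the upper and lower bounds symmetrically, whereas the paper must run two separate chains of inequalities; on the other hand the paper's computation is entirely self-contained and needs no preliminary comparability lemma. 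Two cosmetic points, neither a gap: your detour through the triangle inequality to get $|m+n\tau|\ge\sqrt{5/4}-1$ is unnecessary, since $0\in X$ means the contractivity bound $|z+m+n\tau|\ge\sqrt{5/4}$ applies directly with $z=0$ (or one computes $(m+nu)^{2}+(nv)^{2}\ge 2$ as above); and the comparability constants for the numerator hold for all $k$, not only large $k$, because every $\tau_{k}$ lies in $A_{0}$ and hence satisfies $u_{k}\ge 0$ and $v_{k}\ge 1$.
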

\begin{proof}
We set $\tau = u+iv$ and we set for each $n \in \mathbb{N}$, $\tau_{n} = u_{n}+iv_{n}$. 
Since $\lim_{n \to \infty}\tau_{n}= \tau$, there exists $K \in \mathbb{N}$ such that for all $k \geq K$, $|u - u_{k}| \leq 1$ and $|v - v_{k}| \leq v/3$. 
Then, for all $(m, n) \in \mathbb{N}^{2}$ and $z, z^{\prime} \in X$, 

\begin{align*}
& \frac{|z^{\prime}+m+n\tau_{k}|^{2}}{|z+m+n\tau|^{2}} \\
&\leq \frac{(1+m+nu_{k})^{2}+(1/2+nv_{k})^{2}}{(m+nu)^{2}+(-1/2+nv)^{2}}\\
&\leq \frac{(1+m+n(1+u))^{2}+(1/2+n(4/3)v)^{2}}{(m+nu)^{2}+(-1/2+nv)^{2}}\\
&= \frac{(1+m+n(1+u))^{2}}{(m+nu)^{2}+(-1/2+nv)^{2}} + \frac{(1/2+n(4/3)v)^{2}}{(m+nu)^{2}+(-1/2+nv)^{2}}\\
&\leq \max\left\{ \frac{(1+(1+u)+1)^{2}}{1^{2}}, \frac{(1+(1+u)+1)^{2}}{u^{2}+(v-1/2)^{2}} \right\} + \frac{(1/2n+(4/3)v)^{2}}{(v-1/2n)^{2}}\\
&\leq \max\left\{ \frac{(1+(1+u)+1)^{2}}{1^{2}}, \frac{(1+(1+u)+1)^{2}}{u^{2}+(v-1/2)^{2}} \right\} + \frac{(1/2+(4/3)v)^{2}}{(v-1/2)^{2}} < \infty
\end{align*}
and 

\begin{align*}
\frac{|z^{\prime}+m+n\tau_{k}|^{2}}{|z+m+n\tau|^{2}}
&\geq \frac{(m+nu_{k})^{2}+(-1/2+nv_{k})^{2}}{(1+m+nu)^{2}+(1/2+nv)^{2}}\\
&\geq \frac{m^{2}+(-1/2+n(2/3)v)^{2}}{2(1+m+n \max\{u, v\})^{2}}\\
&\geq \frac{1}{2} \left( \min \left\{ \frac{1}{1+1+\max\{u, v\} }, \frac{(2/3)v-1/2}{1+1+\max\{u, v\} } \right\} \right)^{2} > 0. 
\end{align*}
Therefore, we have proved our lemma. 
\end{proof}

We now prove Theorem \ref{main1}. 

\begin{proof}
By Lemma \ref{EL}, for each $\tau \in A_{0}$, the value $h_{\tau }$ is equal to the unique zero of the pressure function of $S_{\tau }$. 
Moreover, by Lemma \ref{INFTYLEM} and Theorem \ref{betweenoneandtwo}, we have that $1<h_{\tau}<2$ for each $\tau \in A_{0}$ and $h_{\tau }\rightarrow 1$ as $\tau \rightarrow \infty $ in $A_{0}$. 

We next show that if a sequence $\{ \tau_{n} \}_{n \in \mathbb{N}}$ in $A_{0}$ satisfies $\lim_{n \to \infty} \tau_{n} = \tau$, then $\lambda(\{ S_{\tau_{n}} \}_{n \in \mathbb{N}}) = S_{\tau}$. 
Since for all $(m, n) \in \mathbb{N}^{2}$, $\phi_{z+m+n\tau}(z) = 1/(z+m+n\tau)$ and $(\phi_{m+n\tau})^{\prime}(z) = (-1)/(z+m +n\tau)^{2}$, condition (L1) is satisfied. 
Since $X$ is compact, there exist $z_{0}, z_{k} \in X$ such that 

\begin{align*}
\log \left( \sup_{x \in X} |\phi_{m+n\tau}^{\prime}|/ \sup_{x \in X}|\phi_{m+n\tau_{k}}^{\prime}| \right)
&= \log(|\phi_{m+n\tau}^{\prime}(z_{0})|/|\phi_{m+n\tau_{k}}^{\prime}(z_{k})|)\\
&= \log(|z_{k}+m+n\tau_{k}|^{2}/|z_{0}+m+n\tau|^{2}). 
\end{align*}
By Lemma \ref{LOGEST}, there exist $C > 0$ and $K \in \mathbb{N}$ such that for each $k \geq K$ and $(m, n) \in \mathbb{N}^{2}$, 

\begin{align*}
&\ \left| \ \log \left(\sup_{z \in X}|\phi_{m+n\tau}^{\prime}(z)|\right)- \log \left(\sup_{z \in X}|\phi_{m+n\tau_{k}}^{\prime}(z)|\right) \ \right| \\
&= \left| \log \left( \sup_{z \in X}|\phi_{m+n\tau}^{\prime}(z)|/ \sup_{z \in X}|\phi_{m+n\tau_{k}}^{\prime}(z)| \right) \right| \leq C. 
\end{align*}
Therefore, we have proved that if a sequence $\{ \tau_{n} \}_{n \in \mathbb{N}}$ in $A_{0}$ satisfies $\lim_{n \to \infty} \tau_{n} = \tau$, then $\lambda(\{ S_{\tau_{n}} \}_{n \in \mathbb{N}}) = S_{\tau}$. 

We next show that $\tau \mapsto h_{\tau}$ is continuous in $A_{0}$. 
By Theorem \ref{LMDACONTI}, 
$S_{\tau} \mapsto h_{\tau}$ is continuous with respect to the $\lambda$-topology. 
By Lemma 3.3 of \cite{RU}, 
if $\lambda(\{ S_{\tau_{n}} \}_{n \in \mathbb{N}}) = S_{\tau}$, then $\lim_{n \to \infty}h_{\tau_{n}} = h_{\tau}$. 
Thus, if $\lim_{n \to \infty} \tau_{n} = \tau$, then $\lim_{n \to \infty}h_{\tau_{n}} = h_{\tau}$. 
Therefore, we have proved that $\tau \mapsto h_{\tau}$ is continuous in $A_{0}$. 
\end{proof}

\newpage
\subsection{Proof of Main result B}
We now prove Theorem \ref{main2}. 
\begin{proof}
We first show that $\tau \mapsto h_{\tau}$ is subharmonic in $\inter(A_{0})$. 
Let $z \in X$ and Let $(m, n) \in \mathbb{N}^{2}$. 
Note that since the real part of $-(m+z)/n$ is negative, $-(m+z)/n$ is not an element of $\inter(A_{0})$. 
Therefore, we deduce that the function $\tau \mapsto \phi_{m+n\tau}(z) = 1/(z+m+n\tau)$ is holomorphic in $\inter(A_{0})$. 
Hence, $\{S_{\tau}\}_{\tau \in \inter(A_{0})}$ is plane-analytic. 
Therefore, by using Theorem \ref{PATHM}, we obtain that $\tau \mapsto h_{\tau}$ is subharmonic in $\inter(A_{0})$. 

We next show that $\tau \mapsto h_{\tau}$ is real-analytic in $\inter(A_{0})$. 
Since for each $\tau \in A_{0}$, $S_{\tau}$ is a hereditarily regular CIFS, we have that for each $\tau \in \inter(A_{0})$, $S_{\tau}$ is a strongly regular CIFS. 
We now show that for any $\tau_{0} \in \inter(A_{0})$, there exists an open ball $U \subset \inter(A_{0})$ with center $\tau_{0}$ and $\eta \in (0, 1)$ such that for all $\tau \in U$ and $w := (m_{i}, n_{i})_{i \in \mathbb{N}} \in (\mathbb{N}^{2})^{\infty}$, $\left| \kappa^{\tau_{0}}_{w}(\tau) -1 \right| \leq \eta$, 
where we denote $(\phi_{m_{1}+n_{1}\tau}^{\prime}(\pi_{\tau}\sigma w))/(\phi_{m_{1}+n_{1}\tau_{0}}^{\prime}(\pi_{\tau_{0}}\sigma w))$ by $\kappa^{\tau_{0}}_{w}(\tau)$. 

%
By Lemma \ref{LOGEST}, there exists an open ball $U^{\prime\prime} \subset \inter(A_{0})$ with center $\tau_{0}$ such that $|\kappa^{\tau_{0}}_{w}|$ is bounded in $U^{\prime\prime}$ uniformly on $w \in (\mathbb{N}^{2})^{\infty}$. 
Note that $\kappa_{w}^{\tau_{0}}$ is holomorphic in $\inter(A_{0})$. 
By using the Cauchy formula 
\[(\kappa^{\tau_{0}}_{w})^{\prime}(\tau) = \frac{1}{2 \pi i}\int_{\partial U^{\prime\prime}}\frac{\kappa^{\tau_{0}}_{w}(\xi)}{(\xi - \tau)^{2}} \mathrm{d}\xi \quad (\tau \in U^{\prime\prime}), \]
we deduce that there exists $M > 0$ such that for all $\tau \in U^{\prime}$ and $w \in (\mathbb{N}^{2})^{\infty}$, $|(\kappa^{\tau_{0}}_{w})^{\prime}(\tau)| \leq M$. 
Here, $U^{\prime}$ is an open ball with center $\tau_{0}$ such that $U^{\prime} \subset U^{\prime\prime}$. 
Then, we have that 

\begin{align*}
|\kappa^{\tau_{0}}_{w}(\tau) -1| &= |\kappa^{\tau_{0}}_{w}(\tau) - \kappa^{\tau_{0}}_{w}(\tau_{0}) |\\
&= \left| \int_{\tau_{0}}^{\tau} (\kappa^{\tau_{0}}_{w})^{\prime}(\xi) \mathrm{d}\xi \right|\\
&\leq \int_{\tau_{0}}^{\tau} |(\kappa^{\tau_{0}}_{w})^{\prime}(\xi)||\mathrm{d}\xi| \leq M |\tau - \tau_{0}|. 
\end{align*}
It follows that there exist an open ball $U (\subset U^{\prime})$ with center $\tau_{0}$ and an element $\eta \in (0, 1)$ such that for all $\tau \in U$ and $w \in (\mathbb{N}^{2})^{\infty}$, $\left| \kappa^{\tau_{0}}_{w}(\tau) -1 \right| \leq \eta$. 

Thus, for any $\tau_{0} \in \inter(A_{0})$, there exists an open ball $U \subset \inter(A_{0})$ with center $\tau_{0}$ such that $\{S_{\tau}\}_{\tau \in U}$ is regularly plane-analytic. 
By Theorem \ref{PRATHM}, for any $\tau_{0} \in \inter(A_{0})$, there exists an open ball $U \subset \inter(A_{0})$ with center $\tau_{0}$ such that the map $\tau \mapsto h_{\tau}$ is real-analytic in $U$. 
Since $\tau_{0}$ is arbitrary, we deduce that the map $\tau \mapsto h_{\tau}$ is real-analytic in $\inter(A_{0})$. 
Combining this with Main result A, we obtain that the function $\tau \mapsto h_{\tau }$ is not constant on any non-empty open subset of $A_{0}.$ 
\end{proof}

\newpage
\subsection{Proof of Main result C}
We now prove Corollary \ref{main3}. 
\begin{proof}
For each $n \in \mathbb{N}$, 
let $B_{n} := A_{0} \cap \{ z \in \mathbb{C} |\ |\Re z| \leq n \ \mathrm{and} \ |\Im z| \leq n \} $. 
Note that for all $n \in \mathbb{N}$, the map $\tau \mapsto h_{\tau}$ is subharmonic in $\inter(B_{n})$  by Theorem \ref{main2}. 
Let $\epsilon := (h_{i}-1)/2 > 0$, where $i = \sqrt{-1}$. 
By Lemma \ref{INFTYLEM}, we deduce that there exists $N \in \mathbb{N}$ such that for all $\tau \in A_{0} \setminus B_{N}$, $|h_{\tau} - 1| < \epsilon$. 
It follows that $(h_{i}-1)/2 > h_{\tau} -1$. 
Then, we obtain that for all $\tau \in A_{0} \setminus B_{N}$, 
\[h_{i} > 2h_{\tau} - 1 = h_{\tau} + (h_{\tau} - 1) > h_{\tau}. \]
Since the function $\tau \mapsto h_{\tau}$ is continuous in $B_{N}$, there exists a maximum point of the function $\tau \mapsto h_{\tau}$ in $A_{0}$ and 
\[\max \{h_{\tau} |\ \tau \in A_{0} \} = \max \{h_{\tau} |\ \tau \in B_{N} \}. \]

Since the function $\tau \mapsto h_{\tau}$ is subharmonic in $\inter(A_{0})$, there exists no maximum point of the function $\tau \mapsto h_{\tau}$ in $\inter(A_{0})$. 
Thus, we have proved Corollary \ref{main3}. 
\end{proof}

\section{Appendix: the proof of the fact $\overline{J_{\tau}}\setminus J_{\tau}$ is at most
countable}
In this section, for the readers, we give the proof of the fact for each $\tau \in A_{0}$,  $\overline{J_{\tau}} \setminus J_{\tau}$ is at most countable and $h_{\tau}=\dim _{\mathcal{H}}(\overline{J_{\tau}})$ (\cite[Theorem 6.11]{Su}). 
We introduce some additional notations.  
\begin{definition}
Let $S$ be a CIFS. We write $S$ as $\{\phi_{i}\}_{i \in I}$. Suppose that $I$ is a countable infinite set. 
We set $I^{*} := \bigcup_{n \in \mathbb{N}} I^{n}$. 
Let $z \in X$ and $\{ z_{i} \}_{i \in I^{\prime}} \subset X$ with $I^{\prime} \subset I$ and $|I^{\prime}|=\infty$. 
We say that $\lim_{i \in I^{\prime}}z_{i} = z$ if for each $\epsilon > 0$, there exists $F^{\prime} \subset I^{\prime}$ with $|F^{\prime}| < \infty$ such that if $i \in I^{\prime} \setminus F^{\prime}$, then $|z_{i} - z| < \epsilon $. 
We set
\[ X_{S}(\infty) := \{ \lim_{i \in I^{\prime}}z_{i} \in X | \ I^{\prime} \subset I \ \text{with} \ |I^{\prime}|=\infty, \{z_{i}\}_{i \in I^{\prime}}, z_{i} \in \phi_{i}(X) \ (i \in I^{\prime})\}. \]
\end{definition}
Mauldin and Urba\'{n}ski showed the following results. (\cite{MU})
\begin{lemma}{\cite[Lemma 2.5 and Lemma 2.1]{MU}}
Let $S$ be a CIFS. 
We write $S$ as $\{\phi_{i}\}_{i \in I}$. 
Suppose that $I$ is a countable infinite set. 
Then we have that 
\[ \overline{J_{S}} = J_{S} \cup\bigcup_{w \in I^{\ast}}\phi_{w}(X_{S}(\infty)) \cup X_{S}(\infty). \]
\end{lemma}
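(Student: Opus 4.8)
The plan is to prove the two inclusions of the set equality separately, using only uniform contractivity, the BDP, and the forward invariance $\phi_i(J_S)\subseteq J_S$. Two geometric facts drive everything. By uniform contractivity, $\operatorname{diam}\phi_w(X)\le c^{|w|}\operatorname{diam}(X)$, so the images of long words shrink to points. Moreover $\operatorname{diam}\phi_i(X)\to 0$ as $i\to\infty$ in $I$ (as in \cite[Lemma 2.1]{MU}): the sets $\phi_i(\inter(X))$ are pairwise disjoint (OSC) subsets of the bounded set $X$, so $\sum_i\lambda_d(\phi_i(X))<\infty$; since conformality and the BDP give $\lambda_d(\phi_i(X))\asymp\|\phi_i'\|^d$, we get $\|\phi_i'\|\to 0$ and hence $\operatorname{diam}\phi_i(X)\le\|\phi_i'\|\operatorname{diam}(X)\to 0$.

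For the inclusion $\supseteq$, the containment $J_S\subseteq\overline{J_S}$ is trivial. If $z=\lim_{i\in I'}z_i\in X_S(\infty)$ with $z_i\in\phi_i(X)$, fix any $\zeta\in J_S$; then $\phi_i(\zeta)\in\phi_i(J_S)\subseteq J_S$ and $|z_i-\phi_i(\zeta)|\le\operatorname{diam}\phi_i(X)\to 0$ along the infinite set $I'$, so $z\in\overline{J_S}$, giving $X_S(\infty)\subseteq\overline{J_S}$. Since each $\phi_w$ is continuous with $\phi_w(J_S)\subseteq J_S$, it follows that $\phi_w(X_S(\infty))\subseteq\phi_w(\overline{J_S})\subseteq\overline{\phi_w(J_S)}\subseteq\overline{J_S}$, which completes this direction.

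For the inclusion $\subseteq$, which is the heart of the matter, I would run a symbol-peeling induction. Take $z\in\overline{J_S}\setminus J_S$ and points $z_k=\pi(w^{(k)})\in J_S$ with $z_k\to z$, and examine the first symbols $w^{(k)}_1$. If no symbol recurs infinitely often, a subsequence makes them distinct, hence escaping to infinity in $I$; since $z_k\in\phi_{w^{(k)}_1}(X)$, the definition of $X_S(\infty)$ yields $z\in X_S(\infty)$. If instead some symbol $i_1$ recurs infinitely often, restrict to that subsequence: then $\phi_{i_1}^{-1}(z_k)=\pi(\sigma w^{(k)})\in J_S$, and $\phi_{i_1}$ being a homeomorphism onto its closed image, these converge to $y_1:=\phi_{i_1}^{-1}(z)\in\overline{J_S}$ with $z=\phi_{i_1}(y_1)$ and $y_1\notin J_S$ (otherwise $z\in J_S$). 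Recursing produces symbols $i_1,\dots,i_n$ and points $y_n\in\overline{J_S}\setminus J_S$ with $z=\phi_{i_1\cdots i_n}(y_n)$: if the first case ever occurs, then $y_n\in X_S(\infty)$ and $z\in\phi_{i_1\cdots i_n}(X_S(\infty))$, whereas if the second case persisted forever, then $z\in\bigcap_n\phi_{i_1\cdots i_n}(X)$, a nested intersection of sets of diameter tending to $0$, forcing $z=\pi(i_1 i_2\cdots)\in J_S$, a contradiction. Hence the recursion terminates in the first case after finitely many steps, placing $z$ in the right-hand side.

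The main obstacle is the bookkeeping of this induction: correctly formulating the dichotomy between a recurring first symbol and first symbols escaping to infinity, and proving termination, where the shrinking of $\operatorname{diam}\phi_w(X)$ for long words is exactly what forces the leftover point into $J_S$. Establishing the auxiliary fact $\operatorname{diam}\phi_i(X)\to 0$ is the other ingredient needing care, though it is standard.
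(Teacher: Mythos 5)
The paper never proves this lemma: it is quoted verbatim from \cite[Lemmas 2.1 and 2.5]{MU} and used as a black box in the Appendix, so there is no internal proof to compare against; what can be judged is whether your blind reconstruction is correct, and it is. The $\supseteq$ direction is sound: $|z_i-\phi_i(\zeta)|\le \mathrm{diam}\,\phi_i(X)\to 0$ gives $X_S(\infty)\subseteq\overline{J_S}$, and $\phi_w(\overline{J_S})\subseteq\overline{\phi_w(J_S)}\subseteq\overline{J_S}$ handles the middle term. The symbol-peeling induction for $\subseteq$ is the right mechanism and is complete: the dichotomy (some first symbol recurs infinitely often versus every symbol occurs finitely often) is exhaustive; in the escaping case your re-indexing of a subsequence with pairwise distinct first letters matches the paper's definition of $\lim_{i\in I'}$, since all but finitely many indices of $I'$ correspond to late terms of the sequence; in the recurring case, $z\in\phi_{i_1}(X)$ because $\phi_{i_1}(X)$ is compact, $y_1=\phi_{i_1}^{-1}(z)$ lies in $\overline{J_S}\setminus J_S$ because $\phi_{i_1}$ is a homeomorphism onto its image and $\phi_{i_1}(J_S)\subseteq J_S$; and termination is forced by $\mathrm{diam}\,\phi_{i_1\cdots i_n}(X)\le c^n\,\mathrm{diam}\,X\to 0$, which pins $z=\pi(i_1i_2\cdots)\in J_S$ if the recursion never stops. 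Two minor technical points deserve flags, neither fatal: (i) $\mathrm{diam}\,\phi_i(X)\le\|\phi_i'\|\,\mathrm{diam}(X)$ is literally valid only for convex $X$ (true for the disk in this paper); for general compact connected $X$ one gets $\mathrm{diam}\,\phi_i(X)\le D\|\phi_i'\|$ with a constant $D$ coming from connectedness of $V$ and the BDP, which serves identically; (ii) the volume comparison $\lambda_d(\phi_i(X))\asymp\|\phi_i'\|^d$ needs the BDP lower bound $|\phi_i'|\ge K^{-1}\|\phi_i'\|$ on $\inter(X)$ and $\lambda_d(\inter(X))>0$, both of which are available since the OSC forces $\inter(X)\neq\emptyset$. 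With those caveats your proof stands as a correct, self-contained replacement for the citation.
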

Let $\{S_{\tau}\}_{\tau \in A_{0}}$ be the family of CIFSs of generalized complex continued fractions. We set $X_{\tau}(\infty) := X_{S_{\tau}}(\infty)$. 
H. Sugita showed the following result (\cite{Su}). 
\begin{theorem} \label{closuretheorem}
Let $\{S_{\tau}\}_{\tau \in A_{0}}$ be the family of CIFSs of generalized complex continued fractions. 
Then, we have that for all $\tau \in A_{0}$, $X_{\tau}(\infty) = \{0\}$. 
In particular, for each $\tau \in A_{0}$, 
\[ \overline{J_{\tau}} = J_{\tau} \cup \bigcup_{w \in I^{*}}\phi_{w}(\{0\}) \cup \{0\} \quad \text{and} \quad \overline{J_{\tau}} \setminus J_{\tau} \subset \bigcup_{w \in I^{*}}\phi_{w}(\{0\}) \cup \{0\}. \]
\end{theorem}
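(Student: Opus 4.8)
The plan is to reduce everything to the single identity $X_{\tau}(\infty) = \{0\}$: once this is established, both displayed formulas follow immediately by substituting $X_{\tau}(\infty) = \{0\}$ into the decomposition $\overline{J_{\tau}} = J_{\tau} \cup \bigcup_{w \in I^{*}}\phi_{w}(X_{\tau}(\infty)) \cup X_{\tau}(\infty)$ furnished by the preceding Lemma, and then intersecting with the complement of $J_{\tau}$. So the entire content is the two inclusions $\{0\} \subset X_{\tau}(\infty)$ and $X_{\tau}(\infty) \subset \{0\}$. The underlying geometric mechanism is that $\phi_{b}(X)$ collapses to the single point $0$ as $|b| \to \infty$, together with the fact that any infinite subset of $I_{\tau}$ must escape to infinity in modulus.

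To see $0 \in X_{\tau}(\infty)$, I would take $I' = I_{\tau}$ and, for each $b \in I_{\tau}$, set $z_{b} := \phi_{b}(0) = 1/b \in \phi_{b}(X)$ (using $0 \in X$). For any $R > 0$ the set $\{ b \in I_{\tau} : |b| \leq R \}$ is finite: writing $b = m + n\tau$ with $\tau = u + iv$, the defining conditions $u \geq 0$ and $v \geq 1$ give $|b|^{2} = (m + nu)^{2} + (nv)^{2} \geq m^{2} + n^{2}$, so only finitely many pairs $(m,n) \in \mathbb{N}^{2}$ can satisfy $|b| \leq R$. Hence $|z_{b}| = 1/|b|$ drops below any prescribed $\epsilon$ outside a finite exceptional set, which is precisely $\lim_{b \in I_{\tau}} z_{b} = 0$ in the sense of the definition, so $0 \in X_{\tau}(\infty)$.

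For the reverse inclusion, let $z \in X_{\tau}(\infty)$, witnessed by an infinite $I' \subset I_{\tau}$ and points $z_{b} \in \phi_{b}(X)$ with $\lim_{b \in I'} z_{b} = z$. Writing $z_{b} = \phi_{b}(w_{b}) = 1/(w_{b} + b)$ for some $w_{b} \in X$ and using $|w_{b}| \leq 1$ (since $X \subset \overline{B}(1/2, 1/2)$), I obtain $|z_{b}| \leq 1/(|b| - 1)$ for all $b$ with $|b| > 1$. By the finiteness fact above, all but finitely many $b \in I'$ have $|b|$ arbitrarily large, so $\lim_{b \in I'} z_{b} = 0$ as well. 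Because any two cofinite subsets of the infinite set $I'$ intersect, the limit along this filter is unique in the Hausdorff space $\mathbb{C}$, and therefore $z = 0$. This yields $X_{\tau}(\infty) \subset \{0\}$, which with the first direction gives $X_{\tau}(\infty) = \{0\}$.

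All of these computations are elementary; the two points that genuinely require care are the geometric fact that $I_{\tau}$ has no finite accumulation in modulus — which is exactly what forces every infinite index subset to escape to infinity and makes $\phi_{b}(X)$ collapse to $0$ — and the correct treatment of the non-standard limit notion $\lim_{i \in I'} z_{i}$, whose uniqueness must be invoked explicitly rather than assumed. I expect the only (modest) obstacle to be phrasing the escape-to-infinity step cleanly, since the witnessing set $I'$ is an arbitrary infinite subset rather than an ordinary sequence, so the argument must be run through the cofinite filter rather than through indices tending to infinity.
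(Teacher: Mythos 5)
Your proof is correct and takes essentially the same approach as the paper: both directions rest on the same two facts, namely that $\phi_{b}(0)$ tends to $0$ along an infinite subfamily of $I_{\tau}$ (the paper uses the subfamily $\{m+\tau\}_{m\in\mathbb{N}}$, you use all of $I_{\tau}$), and that $\phi_{b}(X)$ is contained in arbitrarily small balls about $0$ once $|b|$ is large, which forces any element of $X_{\tau}(\infty)$ to be $0$ (the paper phrases this as a contradiction with $\delta = |a|/2$, while you argue directly via uniqueness of the cofinite-filter limit --- a purely stylistic difference). The ``in particular'' statements are deduced from the Mauldin--Urba\'{n}ski decomposition lemma in the same way in both proofs.
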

\begin{proof}
We first show that for all $\tau \in A_{0}$, $0 \in X_{\tau}(\infty)$. 
We set $I_{\tau}^{\prime} := \{ m + \tau \in I_{\tau} |\ m \in \mathbb{N} \} \subset I_{\tau}$ and $b_{m} := m + \tau \in I_{\tau}^{\prime}$. 
Then, we have that $|I_{\tau}^{\prime}| = \infty$ and since $0 \in X$, $\phi_{b_{m}}(0) \in \phi_{b_{m}}(X)$. 
Let $\epsilon > 0$. 
Then, there exists $M \in \mathbb{N}$ such that $M > 1/\epsilon$. 
Let $F_{\tau} := \{ m + \tau \in I_{\tau} |\ m \in \mathbb{N}, m \leq M \} \subset I_{\tau}^{\prime}$. 
We obtain that $|F_{\tau}| < \infty$ and if $b_{m} \in I_{\tau}^{\prime} \setminus F_{\tau}$, then $\phi_{b_{m}}(0) \in \phi_{b_{m}}(X)$ and
\[|\phi_{b_{m}}(0)| = \left| \frac{1}{m + \tau} \right| < \frac{1}{m} < \frac{1}{M} < \epsilon. \]
Thus, for all $\tau \in A_{0}$, $0 \in X_{\tau}(\infty)$. 

We next show that for each $\tau \in A_{0}$, $a \in X_{\tau}(\infty)$ implies $a = 0$. 
Suppose that there exists $a \in X_{\tau}(\infty)$ such that $a \neq 0$. 
Then, there exist $I^{\prime}_{\tau} \subset I_{\tau}$ and $\{z_{b}^{\prime}\}_{b \in I^{\prime}_{\tau}}$ such that $|I^{\prime}_{\tau}| = \infty$, $z_{b}^{\prime} \in \phi_{b}(X) \ \ (b \in I^{\prime}_{\tau})$ and $\displaystyle \lim_{b \in I^{\prime}_{\tau}}z_{b}^{\prime} = a$. 
Let $\delta := |a|/2 > 0$. 
Then, there exists $F_{\tau}^{\prime} \subset I_{\tau}^{\prime}$ such that $|F_{\tau}^{\prime}| < \infty$ and for all $b \in I^{\prime}_{\tau} \setminus F_{\tau}^{\prime}$, $|z_{b}^{\prime} -a| < \delta$. 
In particular, for all $b \in I^{\prime}_{\tau} \setminus F_{\tau}^{\prime}$, 
\[|z_{b}^{\prime}| \geq |a| - |z_{b}^{\prime} - a| > \delta.  \tag{*} \label{xbabove}\]

Moreover, for each $z \in X$, $\tau \in A_{0}$ and $b \in I_{\tau}$, we write $z := x +yi$, $\tau := u+iv$ and $b := m+n\tau$. 
Note that

\begin{align*}
|z+b|^{2}	&= |x+m+nu+i(y+nv)|^{2}\\ 
			&= (x+m+nu)^{2}+(y+nv)^{2}\\
            &\geq (0+m+nu)^{2} + (-1/2+nv)^{2} \geq m^{2} + (n -1/2)^{2}. 
\end{align*}
Let $M := 1/\delta$. 
By using the above inequality, there exists $N_{\delta} \in \mathbb{N}$ such that for all $m \in \mathbb{N}$, $n \in \mathbb{N}$ and $x \in X$, if $m \geq N_{\delta}$ or $n \geq N_{\delta}$, then $|z+b| > M = 1/\delta$. 
In particular, $b \in I_{\tau} \setminus F_{\tau}(N_{\delta})$ implies that for all $z \in X$, $|\phi_{b}(z)|< \delta$. 
Here, $F_{\tau}(N_{\delta}) := \{ b:= m+n\tau \in I_{\tau} |\ n \leq N_{\delta}, m \leq N_{\delta} \}$. 

By the inequality (\ref{xbabove}) and $|F_{\tau}(N_{\delta})| < \infty$, this contradicts that there exist $b \in I^{\prime}_{\tau} \setminus (F_{\tau}^{\prime}\cup F_{\tau}(N_{\delta}))$ and $z_{b}^{\prime} \in \phi_{b}(X)$ such that $|z_{b}^{\prime}| > \delta$. 
Therefore, we have proved that for all $\tau \in A_{0}$, $X_{\tau}(\infty) = \{0\}$. 
\end{proof}
\begin{corollary}\label{closurecor}
Let $\{S_{\tau}\}_{\tau \in A_{0}}$ be the family of CIFSs of generalized complex continued fractions. 
Then, we have that for all $\tau \in A_{0}$, $\dim_{\mathcal{H}}(\overline{J_{\tau}}) = h_{\tau}$. 
\end{corollary}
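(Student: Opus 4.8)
The plan is to deduce the corollary directly from Theorem \ref{closuretheorem} together with two standard facts about Hausdorff dimension: monotonicity and countable stability. First I would invoke Theorem \ref{closuretheorem}, which gives, for each $\tau \in A_{0}$, the decomposition
\[ \overline{J_{\tau}} = J_{\tau} \cup \bigcup_{w \in I^{*}}\phi_{w}(\{0\}) \cup \{0\}, \qquad \overline{J_{\tau}} \setminus J_{\tau} \subset \bigcup_{w \in I^{*}}\phi_{w}(\{0\}) \cup \{0\}. \]
The key observation is that the exceptional set $E_{\tau} := \bigcup_{w \in I^{*}}\phi_{w}(\{0\}) \cup \{0\}$ is at most countable. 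Indeed, $I = I_{\tau}$ is bijective to $\mathbb{N}^{2}$, hence countable, so each $I^{n}$ is countable and $I^{*} = \bigcup_{n \in \mathbb{N}} I^{n}$ is a countable union of countable sets, thus countable. Since $\phi_{w}(\{0\})$ is a single point for each $w \in I^{*}$, the set $E_{\tau}$ is a countable union of singletons and therefore at most countable.

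Next I would recall that any at most countable subset of $\mathbb{R}^{d}$ has Hausdorff dimension $0$ (a single point has dimension $0$, and Hausdorff dimension is countably stable, i.e. $\dim_{\mathcal{H}}(\bigcup_{k} A_{k}) = \sup_{k} \dim_{\mathcal{H}}(A_{k})$). In particular $\dim_{\mathcal{H}}(E_{\tau}) = 0$, and since $\overline{J_{\tau}} \setminus J_{\tau} \subset E_{\tau}$, monotonicity of Hausdorff dimension gives $\dim_{\mathcal{H}}(\overline{J_{\tau}} \setminus J_{\tau}) = 0$ as well.

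Finally I would combine these using countable (indeed finite, here) stability applied to $\overline{J_{\tau}} = J_{\tau} \cup (\overline{J_{\tau}} \setminus J_{\tau})$, which yields
\[ \dim_{\mathcal{H}}(\overline{J_{\tau}}) = \max\{ \dim_{\mathcal{H}}(J_{\tau}), \dim_{\mathcal{H}}(\overline{J_{\tau}} \setminus J_{\tau}) \} = \max\{ h_{\tau}, 0 \} = h_{\tau}, \]
where the last equality uses $h_{\tau} > 1 > 0$ from Theorem \ref{betweenoneandtwo} (and $h_{\tau} = \dim_{\mathcal{H}}(J_{\tau})$ by definition of $h_{\tau}$). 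Honestly there is no serious obstacle in this argument: the entire content is packaged in Theorem \ref{closuretheorem}, whose conclusion $X_{\tau}(\infty) = \{0\}$ forces the boundary contribution to be a countable orbit of a single point. The only point requiring a line of justification is the countability of $E_{\tau}$, which reduces to the countability of $I^{*}$; everything else is a routine invocation of the monotonicity and countable stability of Hausdorff dimension.
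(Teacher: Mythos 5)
Your proposal is correct and follows essentially the same route as the paper: both deduce from Theorem \ref{closuretheorem} that $\overline{J_{\tau}}\setminus J_{\tau}$ is at most countable, hence of Hausdorff dimension zero, and conclude via $\dim_{\mathcal{H}}(\overline{J_{\tau}}) = \max\{\dim_{\mathcal{H}}J_{\tau}, \dim_{\mathcal{H}}(\overline{J_{\tau}}\setminus J_{\tau})\} = h_{\tau}$. Your explicit justification of the countability of $\bigcup_{w \in I^{*}}\phi_{w}(\{0\}) \cup \{0\}$ via the countability of $I^{*}$ is a detail the paper leaves implicit, but it is the same argument.
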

\begin{proof}
By Theorem \ref{closuretheorem}, we obtain that $\overline{J_{\tau}}\setminus J_{\tau}$ is at most countable. 
Note that if $A$ is at most countable, then $\dim_{\mathcal{H}}A = 0$. 
Thus, 
\[\dim_{\mathcal{H}}(\overline{J_{\tau}}) = \max \{ \dim_{\mathcal{H}}J_{\tau}, \dim_{\mathcal{H}}(\overline{J_{\tau}}\setminus J_{\tau}) \} = \max \{ \dim_{\mathcal{H}}J_{\tau}, 0 \} = h_{\tau}\]
Therefore, we have proved Corollary \ref{closurecor}. 
\end{proof}

\noindent Kanji INUI\\
Course of Mathematical Science, Department of Human Coexistence, \\
Graduate School of Human and Environmental Studies, Kyoto University\\
Yoshida-nihonmatsu-cho, Sakyo-ku, Kyoto, 606-8501, JAPAN\\
E-mail: inui.kanji.43a@st.kyoto-u.ac.jp\\

\noindent Hikaru OKADA\\
Department of Mathematics, Graduate School of Science, Osaka University\\
1-1, Machikaneyama-cho, Toyonaka-shi, Osaka, 560-0043, JAPAN\\

\noindent Hiroki SUMI\\
Course of Mathematical Science, Department of Human Coexistence, \\
Graduate School of Human and Environmental Studies, Kyoto University\\
Yoshida-nihonmatsu-cho, Sakyo-ku, Kyoto, 606-8501, JAPAN\\
E-mail: sumi@math.h.kyoto-u.ac.jp\\
Homepage: http://www.math.h.kyoto-u.ac.jp/\textasciitilde sumi/index.html
\end{document}